\newtheorem{theorem}{Theorem}[section]
\newtheorem{corollary}{Corollary}[section]
\newtheorem{lemma}{Lemma}[section]
\newtheorem{proposition}{Proposition}[section]
\newenvironment{proof}[1][Proof]{\textbf{#1.} }{\ \rule{0.5em}{0.5em}}
\def\t{\theta}
\def\tt{\hat{\theta}^n}
\def\f{{\hat f}_n}
\begin{document}

\textbf{
\Large{\centerline{Strong consistency of pseudo-likelihood parameter}
    \centerline{estimator for univariate Gaussian mixture models}}}
%
%\textbf{\centerline{(Running title: Consistency of pseudo-%likelihood estimator)}}
%
%
%
\newline \newline
\centerline{J\"{u}ri Lember$^{a\ast}$, jyril@ut.ee}
\centerline{Raul Kangro$^a$, raul.kangro@ut.ee}
\centerline{Kristi Kuljus$^a$, kristi.kuljus@ut.ee}
\newline \newline
\centerline{\small{$^a$Institute of Mathematics and Statistics, University of Tartu;}}
\centerline{\small{Narva mnt 18, 51009, Tartu, Estonia}}
\centerline{\small{$^{\ast}$ Corresponding author}}

\begin{abstract} We consider a new method for estimating the parameters of univariate Gaussian mixture models. The method relies on a nonparametric density estimator $\hat{f}_n$ (typically a kernel estimator). For every set of Gaussian mixture components, $\hat{f}_n$ is used to find the best set of mixture weights. That set is obtained by minimizing the $L_2$ distance between $\hat{f}_n$ and the Gaussian mixture density with the given component parameters. The densities together with the obtained weights are then plugged in to the likelihood function, resulting in the so-called pseudo-likelihood function. The final parameter estimators are the parameter values that maximize the pseudo-likelihood function together with the corresponding weights. The advantages of the pseudo-likelihood over the full likelihood are: 1) its arguments are the means and variances only, mixture weights are also functions of the means and variances; 2) unlike the likelihood function, it is always bounded above. Thus, the maximizer of the pseudo-likelihood function -- referred to as the pseudo-likelihood estimator -- always exists. In this article, we prove that the pseudo-likelihood estimator is strongly consistent.

\textbf{Keywords}: distance-based estimation, Gaussian mixture distributions, kernel density, maximum likelihood estimation, pseudo-likelihood estimator, strong consistency
\end{abstract}

\section{Introduction}
\subsection{Pseudo-likelihood estimator}
We consider the problem of parameter estimation in a univariate Gaussian mixture model with $k$ components. In \cite{A1}, a new method called the {\it pseudo-likelihood approach} was proposed for estimating all parameters (means, variances, weights). In the present article, we establish the strong consistency of the pseudo-likelihood estimator. The pseudo-likelihood approach relies on a nonparametric density estimator $\hat{f}_n$ (typically a kernel estimator), which is used to estimate the mixture weights. More precisely, letting $\theta_i=(\mu_i,\sigma_i)$ denote the mean and variance of the $i$-th mixture component, and $g(\t_i,\cdot)$ the corresponding Gaussian density, the weights are obtained by minimizing the $L_2$ distance between $\hat{f}_n$ and the mixture density with fixed parameters $\theta=(\t_1,\ldots,\t_k)$:
\begin{align}\label{weights0}
v^n(\theta):=\arg\inf_{w\in S_k}\|\hat{f}_n(\cdot)-\sum_{i=1}^k w_i g(\t_i,\cdot)\|.
\end{align}
Here, $S_k$ denotes the $(k-1)$-dimensional simplex, defined as $$S_k := \{(w_1, \ldots, w_k): w_i \geq 0,\ \sum_i w_i = 1\},$$ and $\|\cdot\|$ denotes the $L_2$ norm. The second step of the pseudo-likelihood  approach is to plug the obtained weights $v^n(\t)$, together with the parameters $\t$, into the likelihood function to get the {\it pseudo-likelihood function}
$$L_n(\t):=\prod_{t=1}^n\big(\sum_{i=1}^k v^n_i(\t)g(\t_i,y_t)\big),$$
where $y_1,\ldots,y_n$ is the observed sample. In \cite{A1}, it was proved that for distinct $y_1,\ldots,y_n$, $L_n(\t)$ is bounded even when the variances are not bounded away from 0 (Theorem 2.2 in \cite{A1}). This implies that the maximizer $\hat{\theta}_n$ of $L_n(\t)$ exists almost surely. The estimator
$\hat{\theta}_n$ will be referred to as the {\it maximum pseudo-likelihood estimator}. The main goal of the present article is to show that, under an i.i.d.~sample, the maximum pseudo-likelihood estimator is strongly consistent: $\hat{\theta}_n\stackrel{a.s.}{\to} \t^*$ and $v^n(\hat{\theta}_n)\stackrel{a.s.}{\to} w^*$, where $\t^*$ and $w^*$ denote parameters and weights of the true distribution. In the sequel, the term "parameters" refers to the means and variances, excluding the weights -- although, strictly speaking, the weights are also parameters of the mixture density. The consistency result is stated as Theorem \ref{thm}.

Estimating the mixture weights using the $L_2$ distance has already been (implicitly) exploited in the so-called DUDE method for signal denoising \cite{DUDE05a, DUDE05b}. The setting and objective in \cite{DUDE05a, DUDE05b} differ somewhat from ours. In terms of the present paper, their case corresponds to that with known component densities $g_i$ and the weights $w$ are   estimated as
$\hat{w}=A^{-1}\hat{u}$, where $\hat{u}_i=\langle \hat{f},g_i\rangle$ and  $A=(a_{ij})$ is the Gram matrix with entries $a_{ij}=\langle g_i,g_j\rangle$.  When $\hat{f}\ne f$, the estimate $\hat{w}$ may lie outside the simplex. Therefore, we use the direct estimate (\ref{weights0}) even when we do not have a closed form of $v^n(\theta)$ any more.
In our setting, the emission densities are also unknown, so we optimize a different objective function -- the pseudo-likelihood -- to estimate simultaneously both the densities and the weights.

A key feature of our estimation procedure is that the $L_2$ distance is used solely for estimating the mixture weights and not for the entire mixture density. There exists a large body of literature on distance-based estimation of mixture distributions, where the entire mixture model is estimated by minimizing a distance between a nonparametric (typically kernel-based) density estimate $\hat{f}_n(\cdot)$ and the model  density $\sum_i w_i g(\theta_i,\cdot)$; that is, the minimization is performed over both the weights and the component parameters. Commonly used distances include the $L_2$ and $L_1$ norms, as well as the Hellinger distance. Another option is to minimize the distance between empirical and theoretical distribution functions using the Wolfowitz, Cram\'{e}r-von Mises or Kolmogorov distance, see, e.g., \cite{cutler} and the references therein. These estimators are typically consistent, consistency follows from the continuity of the metric projection and the (relative) compactness of parameter space. In contrast, our estimator combines distance-based estimation with a likelihood-based approach, resulting in an objective function of a different nature. As a consequence, standard tools based on metric projection are not sufficient to establish consistency.
%--------

The current article is a direct follow-up of \cite{A1}, where the pseudo-likelihood method was introduced. The simulations in \cite{A1} demonstrate good behavior of the maximum pseudo-likelihood method -- it typically outperforms the $L_2$-based estimators and in some of the studied examples even beats the local maximizer of the likelihood function obtained with the EM algorithm. In particular, our method performs well when the number of mixture components $k$ is relatively large. This is understandable, since in the pseudo-likelihood approach the mixture weights are no longer treated as independent parameters, which reduces the number of parameters to be estimated. The larger the value of $k$, the greater the reduction. For a further discussion of the relationship with other estimation methods, an overview of the relevant literature, and simulations, we refer the reader to \cite{A1}.
%------------

The article is organized as follows. In Section \ref{sec:setting}, we introduce necessary notation and preliminaries, define the maximum pseudo-likelihood estimator, and state the main consistency theorem (Theorem \ref{thm}). In Section \ref{sec:about}, we give a brief overview of the proof and the guidelines for reading it. Sections 3-6 are devoted to the proof of Theorem \ref{thm}.

\subsection{Motivation for studying the pseudo-likelihood approach}
The need for a pseudo-likelihood arises from the well-known fact that the likelihood of Gaussian mixtures is unbounded. Several approaches have been proposed to address this issue, including restricting the parameter space, using sieves, penalized maximum likelihood estimation, Bayesian methods, profile likelihood, and others; see, e.g., \cite{ranalli, tanaka, tanaka-takemura} and the references therein. In \cite{chen}, consistency of the maximum likelihood estimator (MLE) is proved under the assumption that the variances of the mixture components are equal, a restriction that ensures the boundedness of the likelihood function. In \cite{tanaka-takemura}, consistency of the MLE is proved under the condition that all standard deviations are bounded below by $\exp[-n^d]$. In \cite{tanaka}, consistency is proved under the assumption that the ratio between the minimum and the maximum variance is bounded below by a sequence $b_n$ with $b_n\to 0$, or, more generally, when this ratio is suitably penalized. In this paper, we remove any restriction or penalties on the variances by replacing the likelihood function by the pseudo-likelihood function. The pseudo-likelihood function differs from the likelihood function only through the weights $v^n(\theta)$. In a sense, this represents the minimal modification of the likelihood function required to ensure boundedness without imposing any restrictions on the parameters. The price of this  modification is that existing consistency proofs and results are not directly applicable.

There are obviously many other ways to modify the likelihood function to
obtain an objective function with different properties. Since $v^n(\theta)$ is based on the kernel estimate $\hat{f}_n$, let us mention the so-called double-smoothed likelihood introduced in \cite{Seo17,SeoLindsey}.  Recall that the standard MLE minimizes the Kullback-Leibler divergence between the empirical measure and the assigned model. In the double-smoothed likelihood, both arguments of the Kullback-Leibler divergence -- the empirical measure and the assigned distribution -- are smoothed using the same kernel. The resulting function is bounded and, as shown in \cite{SeoLindsey13}, under rather general assumptions the maximizer of the double-smoothed likelihood (DS-MLE) is consistent. The proof is relatively straightforward and closely follows the classical proof of MLE consistency. To reduce the number of parameters, the weights in the double-smoothed likelihood function can be replaced with $v^n(\theta)$ (as in the  pseudo-likelihood function), and we conjecture that consistency still holds by standard arguments. An advantage of DS-MLE is that the kernel bandwidth can be fixed, i.e., chosen independently of the sample size $n$. In contrast, in our approach, the bandwidth must decrease sufficiently slowly to ensure the convergence $\hat{f}_n\to f$. On the other hand, a drawback of DS-MLE is its computational complexity. In \cite{SeoLindsey}, the authors propose using Monte-Carlo estimation, which is computationally demanding. In contrast, our pseudo-likelihood function can be easily computed and optimized using standard optimization tools, see \cite{A1}. From a theoretical perspective, we believe that keeping the pseudo-likelihood as close as possible to the likelihood helps preserve the desirable properties of the standard MLE. Simulations in \cite{A1} suggest that this may indeed be the case.

\subsection{Generalization beyond the i.i.d.~case}
Throughout the paper, we assume that $Y_1,Y_2,\ldots$ are i.i.d.~observations from a Gaussian mixture distribution. When inspecting the consistency proof, it becomes evident that the assumption of independent observations is used to apply the (uniform) strong law of large numbers, to ensure almost sure weak convergence of empirical measures and to guarantee almost sure convergence $\|\hat{f}_n\|_{\infty}\to \|f\|_{\infty}$, where $f$ denotes the true density. However, all these convergence results also hold in more general settings, suggesting that the consistency theorem may extend beyond the i.i.d.~mixture case to more general latent variable models. In particular, the following model is of interest. Let $X_1,X_2,\ldots$ be a stationary ergodic process taking values in $\{1,\ldots,k\}$ and let the observations $Y_1,Y_2,\ldots$ be as follows: 1) given $X_1,X_2,\ldots$, the observations are (conditionally) independent; 2) given $X_t=i$, the observation $Y_t$ has a Gaussian distribution with parameter $\t_i$. Such models are commonly used in many applications, where the latent $X$-process represents an underlying signal, and the observed $Y$-process models the signal corrupted by Gaussian noise. A classical example of such a model is a hidden Markov model, where the $X$-process is a Markov chain. In such a model, the weights $w^*_i$ are the probabilities $P(X_t=i)$ and the parameters $\theta_i$ are typically called the {\it emission parameters}. When estimating the emission parameters, the order of observations does not matter; thus, one can still use the pseudo-likelihood $L_n(\t)$ as defined above even when the model is not an i.i.d.~mixture model any more. Consistency in this context refers to the convergence of emission parameters as well as the marginal distribution of $X_t$. Due to the ergodicity, we conjecture that the consistency holds and the pseudo-likelihood method is justified for more general models than just i.i.d. mixtures. For maximum likelihood estimation, \cite{lindgren} proved that the maximum likelihood estimator of the parameters of a finite mixture distribution obtained under the assumption of independence (that is, ignoring the actual dependence structure) is consistent and asymptotically normally distributed when the regime process is an ergodic Markov chain. For the maximum spacing estimator, consistency of the estimator for the marginal parameters in hidden Markov models was established in \cite{kuljusMSP}.
%-------------
%
%The generalization beyond the i.i.d.~setting described above makes the pseudo-likelihood procedure %appealing for many applications, including signal processing.

\section{Consistency of pseudo-likelihood estimator}\label{sec:setting}
\subsection{Setting}
Let $\Theta_o=\big(\mathbb{R}\times (0,\infty)\big)^k$ be the parameter space. For every $\t=(\t_1,\ldots,\t_k)\in \Theta_o$, let $g(\theta_i,\cdot)$ stand for Gaussian density with parameter $\t_i=(\mu_i,\sigma_i)$. It may happen that some components coincide, so let $s(\t)\leq k$ be the number of different components.
  We shall identify all vectors $\theta$ with the same set of distinct components as a single parameter, thus $\Theta_o$ should be considered as the set of equivalence classes. For example, all permutations of $\t$ are equivalent. When $s(\t)=k$, then an equivalence class consists of only permutations, otherwise the class is larger. As a representative of an equivalence class, we consider $\t$ under a natural ordering of the parameters: $\mu_1 \leq \ldots \leq \mu_k$, and, in cases where some of the means are equal, the ordering is determined by the corresponding variances. When we discuss uniqueness or equality of parameter vectors, the natural ordering is assumed.

  We shall assume that the true parameter $\t^*$ is such that all components are different, i.e., $s(\t^*)=k$.  We denote the true density by $f$, i.e., $f(\cdot)=\sum_{i=1}^k w^*_ig(\t^*_i,\cdot)$, where $w^*_i>0$, $i=1,\ldots,k$.
 %i.e. $f$ is an honest $k$-component mixture.
 The requirement that $w^*_i > 0$ for every $i = 1, \ldots, k$ ensures that there exists no other parameter $\theta \neq \t^*$ and weights $w$ such that $\sum_i w_i g(\t_i, \cdot) = f(\cdot)$. This follows from the identifiability of Gaussian mixtures (see, e.g., \cite{teicher, schnatter}). The condition is also necessary for uniqueness, since when some weights of $w^*$ equal to zero, then  there exists $\theta\ne \t^*$ and weights $w$ such that
 $\sum_i w_i g(\t_i,\cdot)=f(\cdot)$ -- the true parameter would not be unique. For example, when  $w^*_1=0$, then $\theta$ could be taken as $(\t^*_2,\t^*_2,\t^*_3,\ldots,\t^*_k)\ne \t^*$ and $w$ could be taken as $(w^*_2/2,w^*_2/2,w^*_3,\ldots,w^*_k)$.
 %------------------------------------------------

 Since any $\t$ is an equivalence class, the convergence $\t^n\to \t$ is also a convergence between equivalence classes. For that we consider every class as a set and define the convergence between the parameters (classes) as the convergence between the sets in Hausdorff's sense. In our notation, when $\t=(\t_1,\ldots,\t_k)$ and $\t'=(\t'_1,\ldots,\t'_k)$ are two vectors in $ \Theta_o$, then the Hausdorff distance $h(\t,\t')$ is defined as
 $$h(\t,\t'):=\max\{\max_i \min_j \|\t_i-\t'_j\|,\max_i \min_j \|\t'_i-\t_j\|\}.$$ Clearly, $h(\t,\t')=0$ if and only if $\t$ and $\t'$ are in the same class.
 Note that the convergence $\t^n\to \t$ is equivalent to the condition that every subsequence $\t^{n'}$ has a further subsequence $\t^{n''}$, which converges component-wise to a representative of the equivalence class of $\t$.
 %Observe that the convergence $\t^n\to \t$ holds when there exist  orderings of the members of the sequence (not necessarily the natural ordering) $\t^n=(\t^n_1,\ldots,\t^n_k)$ such that $\t^n_i\to \t_i$ for every $i=1,\ldots,k$, and $(\t_1,\ldots,\t_k)$ is an element from the equivalence class of $\theta$. For example, the sequence $\t^n=(({(-1)^n 3\over n},2),({1\over n},1),({2\over n},1))$ converges component-wise to $((0,2),(0,1),(0,1))$, but is not component-wise convergent according to the natural ordering of elements of the sequence. As the equivalence class of the limit contains also $((0,1),(0,2),(0,2)$, we can not say that the convergence is equivalent to component-wise convergence to a reordering of any representative of the limiting equivalence class, but we can say that for a suitable reordering we have a component-wise convergence to a representative of the equivalence class of the limit.
 Therefore, we work mostly with point-wise convergent parameter sequences in this paper.

 It is important to understand that we can not replace our notion of convergence of sequences of parameter vectors with point-wise convergence of naturally ordered representatives: clearly we want to say that the sequence $(({(-1)^n\over n},1),({(-1)^{n+1} \over n},2))$  converges to $((0,1),(0,2))$, but the sequence of naturally ordered representatives does not converge point-wise and converging subsequences converge to different representatives of the equivalence class of the limiting vector.

 %All the convergences encountered in the paper are actually of this type, i.e., $\t^n\to \t$ implies the component-wise convergence.
 %If $s(\t)=k$, then obviously to every element of equivalence class (to every permutation) corresponds an ordering of $\t^n$ so that component-wise convergence holds. Complications in considering the convergence of the parameters is the price of considering parameters as the elements of a product space rather than sets or multisets. However, in what follows, we shall exploit the product space formalism, hence the price must be paid. It should also be noted that our conventions concerning the convergence do not affect our main result, because due to the assumption $s(\t^*)=k$, the convergence $\hat{\t}_n\to \t^*$ (consistency) could be interpreted as the Hausdorff convergence between sets or the component-wise convergence for any permutation of $\t^*$. Also observe that the assumptions $s(\t^*)=k$ and $w^*_i>0$ (for every $i$) imply that $\hat{\t}_n\to \t^*$ and $v^n\to w^*$ is equivalent to the weak convergence of probability measures: $\sum_i v^n_i \delta_{\hat{\t}^n_i}\Rightarrow \sum_i w^*_i \delta_{\t^*_i}$. The latter formalism is often employed in consistency literature, see \cite{chen}.}
  %-----------------------------------

Recall that $S_k$ stands for the $(k-1)$-dimensional simplex. For $r<k$, let $S_r$ denote the $(r-1)$-dimensional simplex. Recall the definition of weights $v^n(\theta)$ in (\ref{weights0}), where $\t\in \Theta_o$ and $\f$ is any density function in $L_2$.
Throughout the article, for any $w\in S_k$ and for any $k$-dimensional vector of Gaussian densities $g=(g_1,\ldots,g_k)$, we denote $wg:=\sum_{i=1}^kw_i g_i$. The following lemma guarantees that $v^n(\t)$ always exists and that the corresponding density $v^ng$ is unique.
%-----------------------------
\begin{lemma}\label{sisse}
Let $f,g_1,\ldots,g_k\in L_2$. Then there always exists at least one $v\in S_k$ such that
$\|f-vg\|=\inf_{w\in S_k}\|f-wg\|$. If $v_1$ and $v_2$ are two such vectors, then $v_1g=v_2g$.\end{lemma}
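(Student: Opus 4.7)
The plan is to recognize $\{wg:w\in S_k\}$ as a compact, convex subset of the Hilbert space $L_2$ and to invoke the standard metric projection theorem onto closed convex subsets of a Hilbert space.

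For existence, I would first observe that the linear map $\Phi:\mathbb{R}^k\to L_2$ given by $\Phi(w)=wg=\sum_{i=1}^k w_ig_i$ is continuous, since
\[
\|\Phi(w)-\Phi(w')\|\le \sum_{i=1}^k|w_i-w_i'|\,\|g_i\|.
\]
Consequently the real-valued function $w\mapsto \|f-wg\|$ is continuous on $S_k$. Because $S_k$ is a compact subset of $\mathbb{R}^k$, this continuous function attains its infimum, producing at least one minimizer $v\in S_k$.

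For the uniqueness of the density $vg$, set $K:=\Phi(S_k)=\{wg:w\in S_k\}$. As the image of the convex set $S_k$ under the affine map $\Phi$, the set $K$ is convex (it is in fact the convex hull of $\{g_1,\ldots,g_k\}$ in $L_2$); as the continuous image of the compact set $S_k$, it is also compact, hence closed in $L_2$. Since $L_2$ is a Hilbert space, the projection theorem for closed convex subsets guarantees a unique element of $K$ of minimal distance from $f$. Therefore, if $v_1,v_2\in S_k$ are both minimizers in $S_k$, then $v_1g$ and $v_2g$ are both minimizers in $K$, and thus coincide with this unique nearest point, giving $v_1g=v_2g$.

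There is no genuine obstacle here; the only conceptual point worth underlining is the distinction between the minimizer $v\in S_k$ (which need not be unique when $g_1,\ldots,g_k$ are linearly dependent over affine combinations in $S_k$) and the minimizing density $vg\in K$, whose uniqueness comes from strict convexity of the norm in a Hilbert space via the parallelogram identity. This is the feature that lets one speak unambiguously of the density $v^n(\theta)g$ throughout the rest of the paper.
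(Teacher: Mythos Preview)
Your proof is correct and essentially matches the paper's argument. Both obtain existence from continuity of $w\mapsto\|f-wg\|$ on the compact simplex $S_k$, and both deduce uniqueness of $vg$ from strict convexity of the $L_2$ norm; the only cosmetic difference is that you package the uniqueness step as the Hilbert space projection theorem onto the closed convex set $K=\Phi(S_k)$, whereas the paper writes out the strict convexity inequality $\|f-(\lambda v_1+(1-\lambda)v_2)g\|<\lambda\|f-v_1g\|+(1-\lambda)\|f-v_2g\|$ directly.
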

%-------------------------
\begin{proof}
We show that a minimizer exists. The existence of $v$ follows from the compactness of the simplex $S_k$ and the continuity of $w\mapsto \|f-wg\|$. If $v_1$ and $v_2$ are two different minimizers  such that $v_1g\ne v_2g$, the strict convexity of $L_2$ norm would imply that for any $\lambda\in (0,1)$,
$$\|f-(\lambda v_1+(1-\lambda)v_2)g\|=\|\lambda f-\lambda v_1g+ (1-\lambda) f-(1-\lambda) v_2g\|< \lambda\|f-v_1g\|+(1-\lambda)\|f-v_2g\|.$$
That contradicts the definition of $v_1$ and $v_2$.
\end{proof}
\\\\
Lemma \ref{sisse} ensures that for any $\t$, the solution of $\inf_{w\in S_k}\|f(\cdot)-wg(\t,\cdot)\|$, let it be  $v(\t)$, always exists, but  when $s(\theta)<k$, then it is not necessarily unique. However, given $\t$, the density $\sum_{i=1}^k v_i(\t) g(\t_i,\cdot)$ is always unique. Therefore, if $s(\t)=k$, then there are no other solutions $v'$ satisfying $\sum_{i=1}^k v_i(\t) g(\t_i,\cdot)=\sum_{i=1}^k v'_i g(\t_i,\cdot)$. This follows from the identifiability  of Gaussian mixtures --
when $g(\t_i,\cdot)\ne g(\t_j,\cdot)$ for all $i\ne j$, then
$w_1g=w_2g$ would imply $w_1=w_2$ (recall that we have fixed the ordering). Therefore, our assumption $s(\t^*)=k$ guarantees the uniqueness of $w^*$. The identifiability also implies that when $v$ is any minimizer of $\inf_{w\in S_k}\|f-wg\|$, then $\sigma_o=\max \{\sigma_i:v_i>0\}$ is unique, i.e., independent of the choice of a particular minimizer.
Let us remark that we are not aware of the closed form representation of $v^n$
except for the special case $k=2$ (see \cite{A1}, (6)).
%-----------------
\subsection{Consistency theorem}
Let $Y_1,Y_2,\ldots$ be a sequence of i.i.d.~random variables with true density $f(\cdot)=w^*g(\t^*,\cdot)$, where $\t^*\in \Theta_o$ and $w^*=(w^*_1,\ldots,w_k^*)$ are the corresponding strictly positive weights.  Given a nonparametric density estimator $\f$ and $g(\theta):=(g_1(\theta),\ldots, g_k(\theta))$, denote
\begin{equation}\label{ddd}
v^n(\t)=\arg\inf_{w\in S_k}\|\f-wg(\theta)\|,\quad  v(\t)=\arg\inf_{w\in S_k}\|f-wg(\theta)\|.\end{equation}
For every $\t\in \Theta_o$, define the log-pseudo-likelihood function $\ell_n(\t)$ as follows:
\begin{align*}
\ell_n(y,\t)&:=\ln \big(v^n(\t) g(\t,y)\big),\quad \ell(y,\t):=\ln \big(v(\t) g(\t,y)\big),\\
\ell_n(\t)&:={1\over n}\sum_{t=1}^n \ell_n(Y_t,\t),\quad \ell(\t):=E\ell(Y_1,\t).
\end{align*}
Sometimes, to stress the dependence of $\ell_n(\t)$ on $Y_1(\omega),\ldots,Y_n(\omega)$, we use the notation $\ell^{\omega}_n(\t)$. To keep the technique simpler, throughout the paper we ignore the cases where $\ell_n$ is unbounded (it happens with probability zero).  Recall that $v^n(\t) g(\t,\cdot)$ and $v(\t) g(\t,\cdot)$ are unique even when $v^n(\t)$ or $v(\t)$ are not. By our assumptions on $f$ (that is, $s(\t^*)=k$ and $w^*_i>0$ for every $i$), for any $w\in S_k$ and for any $\t\in \Theta_o$ such that $\t\ne \t^*$, it holds that $f(\cdot)\ne wg(\t,\cdot)$, and thus by Gibb's inequality
$$\int f(y) \ln (wg(\t,y))dy<\int f(y) \ln f(y) dy=\int f(y) \ln \big(w(\t^*)g(\t^*,y)\big)dy =\ell(\t^*).$$
Hence, for any $\t\ne \t^*$, it holds that $\ell(\t)<\ell(\t^*)$.
%-----------------
%\paragraph{Consistency of the maximum pseudo-likelihood estimator.}
%
We are interested in consistency of the pseudo-likelihood estimator $\tt$, where $\tt$ is for $\epsilon_n\searrow 0$ defined so that
\begin{equation}\label{mle}
\ell_n(\tt)\geq \sup_{\theta\in \Theta^o}\ell_n(\theta)-\epsilon_n.\quad    \end{equation}
Let $\tt=((\mu_{1,n},\sigma_{1,n}),\ldots,(\mu_{k,n},\sigma_{k,n})).$
Sometimes, to stress the dependence on $\omega$, we denote $\tt_{\omega}$. The main result of the article is the following consistency theorem.
%----------------
In the following, the convergence between functions means convergence in the $L_2$ norm if not stated otherwise.
%-------------------------
\begin{theorem}\label{thm} Assume that $\hat{f}_n \stackrel{a.s.}{\to} f$ (in $L_2$)  and $\exists C<\infty$ so  that $P(\|\hat{f}_n\|_{\infty}<C\,\, {\rm eventually})=1$. Then the  following convergences hold:
$$\tt\stackrel{a.s.}{\to} \t^*,\quad v^n(\tt)\stackrel{a.s.}{\to} w^*, \quad v^n(\tt)g(\tt,\cdot) \stackrel{a.s.}{\to} f(\cdot).$$
\end{theorem}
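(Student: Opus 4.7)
The plan is a Wald-style consistency argument, adapted to the non-compact parameter space $\Theta_o$ and to the fact that $\ell_n$ depends on the sample in two distinct ways: through the empirical average and through the internal weights $v^n(\t)$ computed from $\f$. The easy half is pointwise convergence at the truth, $\ell_n(\t^*)\to \ell(\t^*)$ a.s. Since $f=w^*g(\t^*,\cdot)$, the infimum in the definition of $v(\t^*)$ is zero and is attained uniquely at $w^*$ (using $s(\t^*)=k$ and identifiability). Because $\f\to f$ in $L_2$, continuity of the argmin over the compact simplex gives $v^n(\t^*)\to w^*$ a.s., and the strong law of large numbers applied to the $f$-integrable function $\ln(w^*g(\t^*,Y_1))$ finishes this step after a small uniform-in-$w$ argument that replaces $w^*$ with $v^n(\t^*)$ inside the log.

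The hardest step will be tightness in $\Theta_o$. Since means can escape to $\pm\infty$ and variances can shrink to $0$ or blow up to $\infty$, I have to rule out three ways the sequence $\tt$ can degenerate along a subsequence: (i) $\mu_{i,n}\to \pm\infty$ for some $i$, (ii) $\sigma_{i,n}\to 0$, or (iii) $\sigma_{i,n}\to \infty$. For each scenario the strategy is to show $\limsup \ell_n(\tt)<\ell(\t^*)$ a.s., which contradicts the near-optimality $\ell_n(\tt)\ge \ell_n(\t^*)-\epsilon_n$ combined with the first step. The assumption $\|\f\|_\infty<C$ eventually is critical for (ii): if a Gaussian component concentrates, $\|g(\t_{i,n},\cdot)\|_\infty\to\infty$, yet the $L_2$-projection $v^n(\t_n)g(\t_n,\cdot)$ cannot substantially exceed $C$ in sup-norm, so $v^n_i(\t_n)\to 0$ and the spike contributes negligible mass; one then identifies the $L_2$-limit of $v^n(\t_n)g(\t_n,\cdot)$ as a mixture with strictly fewer than $k$ effective components, which by identifiability must differ from $f$, producing a strict information-theoretic gap via Gibbs' inequality. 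Cases (i) and (iii) are handled by analogous but distinct arguments: in (i) the offending component vanishes pointwise in $y$ and can be dropped, while in (iii) its density flattens to zero pointwise while still integrating to one, again effectively deleting it from the mixture. This is the main technical obstacle of the whole proof, and I expect it to occupy the bulk of Sections 3--6.

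Once tightness is in hand, the conclusion follows by a Wald-style identification. Along any a.s.\ convergent subsequence $\tt\to \t'$ in Hausdorff distance, with componentwise convergent representatives, I would show $\ell_n(\tt)\to \ell(\t')$. This rests on joint continuity of the two-level projection $(\f,\t)\mapsto v^n(\t)g(\t,\cdot)$ in $L_2$, which follows from Lemma \ref{sisse} (strict convexity of the $L_2$-norm gives uniqueness of the projected density and hence continuous dependence on both arguments), together with a uniform strong law of large numbers for the family $\{\ln(v(\t)g(\t,\cdot)):\t\in K\}$ over compact $K\subset\Theta_o$, obtained with an $f$-integrable envelope built from log-sums of Gaussian tails. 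Combining this with $\ell_n(\tt)\ge \ell_n(\t^*)-\epsilon_n\to \ell(\t^*)$ gives $\ell(\t')\ge \ell(\t^*)$, and then the Gibbs inequality already recorded in the excerpt forces $\t'=\t^*$. The subsequence principle yields $\tt\to\t^*$ a.s.; the convergence $v^n(\tt)\to w^*$ then follows from the same joint continuity together with the fact that $w^*$ is the unique minimizer of $\|f-wg(\t^*)\|$, and the final density convergence $v^n(\tt)g(\tt,\cdot)\to f$ is immediate.
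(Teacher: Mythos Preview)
Your overall strategy is the same Wald-style argument the paper follows, and most of your intuitions are correct. However, there is a genuine gap in the tightness step, and the paper organizes the argument differently precisely to avoid it.

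The claim that ``the $L_2$-projection $v^n(\t_n)g(\t_n,\cdot)$ cannot substantially exceed $C$ in sup-norm'' is false as stated: minimizing $L_2$-distance gives no direct $L_\infty$-control. Your conclusion $v^n_i(\t_n)\to 0$ for a concentrating component is still salvageable via an $L_2$ argument (this is the paper's Corollary~\ref{lem:lisa}), but that argument relies on $\|f_n-v^ng^n\|$ being bounded, which in turn uses that at least one component $g^n_j$ converges to a non-degenerate Gaussian. More seriously, your Gibbs-gap approach to tightness breaks down entirely when \emph{all} components degenerate simultaneously (for example all $\sigma_{i,n}\to 0$). In that regime there is no integrable upper envelope for $\ln(v^ng^n)$, since $v^n_i/\sigma_{i,n}$ is only controlled by $a+b/\sigma_{0,n}$ (Lemma~\ref{lem:toke}) and $\sigma_{0,n}\to 0$; so neither a dominated-convergence nor an SLLN argument is available to pass to the ``sub-$k$ mixture'' limit you describe. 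The paper handles this case by a completely different route: Proposition~\ref{uUNbound} uses the bound~(\ref{toks}) together with a Glivenko--Cantelli argument on the sets $A_\theta$ to show directly that $\sup_{\theta\in\bar\Theta(u,U,N)}\ell_n(\theta)\le -W<\ell(\theta^*)$ eventually, thereby forcing at least one component of $\tt$ to have mean in $[-N,N]$ and variance in $[u,U]$.

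Once that single non-degenerate component is secured, the paper does \emph{not} pursue full tightness as you propose. Instead it compactifies: it extends $\ell$ to limits $\theta\in\Theta(u,U,N)$ in which the remaining components may have $\sigma_i\in\{0,\infty\}$ or $|\mu_i|=\infty$, via the function $\ell(\theta,K)$ built from the limit $u_{I_0}g$ of Proposition~\ref{lemma}. Your ``mixture with fewer than $k$ effective components'' intuition is exactly what $u_{I_0}g$ formalizes, and the Gibbs inequality~(\ref{pisi}) is then applied at this extended level. A further point you underplay is that the ordinary SLLN does not yield $\ell_n(\theta^n)\to\ell(\theta,K)$, because the weights $v^n(\theta^n)$ depend on $\omega$ through $\hat f_n$; the paper introduces a Skorohod-representation-based variant (Lemma~\ref{dom}) to handle this, and this is what powers the uniform convergence in Corollary~\ref{cor:pseudo:unif}. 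Your ``uniform SLLN over compact $K\subset\Theta_o$'' sketch does not address this dependence.
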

In the theorem, we do not assume that $\hat{f}_n$ is a kernel estimator, although in practice it is the most natural choice. Since we deal with the estimation of normal mixtures, it is natural to take $\hat{f}_n$ as the Gaussian kernel estimator. When the bandwidth of Gaussian kernel estimator tends to zero sufficiently slowly, then $\|\hat{f}_n-f\|_{\infty} \stackrel{a.s.}{\to}0$  \cite{silverman,zambom}, so that the assumptions on $\hat{f}_n$ are fulfilled.

\subsection{About the proof}\label{sec:about}

In one way or another, all consistency proofs rely on (relative) compactness of the parameter space. Perhaps the most direct and well-known example of this is the famous Wald consistency proof (see, e.g., \cite{chen, vaart}). Another common use of compactness is to establish the uniform convergence $\sup_{\theta} |\ell_n(\theta) - \ell(\theta)| \to 0$ almost surely, and then to use the fact that, on a compact space, uniform convergence implies the convergence of the maximizers (i.e., M-estimators). This is how we prove the consistency in the current
article. Although this approach is standard and widely used, applying it in the present setting involves several technical difficulties.

\paragraph{Unbounded means and vanishing or unbounded variances.} First, the compactification of the parameter space $\Theta_o$ includes zero and infinite variances, as well as infinite means. In the case of Gaussian distributions, zero variances are particularly problematic. Even in the case of i.i.d.~Gaussian random variables, one needs to apply the so-called Kiefer-Wolfowitz trick to handle vanishing variances when using the Wald consistency proof (see, e.g., (5.15) in \cite{vaart}).

To handle unbounded means and vanishing or unbounded variances,
we start by showing that at least one component of $\hat{\theta}_n$ is
such that its variance is bounded away from zero and above and its
mean is bounded as well. In particular, we show that there exist constants $0 < u < U < \infty$ and $N < \infty$ (depending only on the true density) such that, for all sufficiently large $n$, there exists -- with probability one -- a component $i(n)$ for which $|\mu^n_{i(n)}| < N$ and $u \leq \sigma^n_{i(n)} \leq U$.  This property is stated as Proposition \ref{uUNbound} and proved in Section \ref{sec:bound}. The proof uses some ideas from the proof of Lemma 3.1 in \cite{chen}. Proposition \ref{uUNbound} ensures that for every convergent subsequence $\hat{\theta}^{n'}\to \theta$, the limit $\theta$ also contains a component whose parameters are bounded as described above. It also allows us to reduce the parameter
space so that the parameters of at least one component are bounded as described above; this set is denoted
by $\Theta_o(u,U,N)$.

\paragraph{Uniform convergence of the criterion function.} The next step is to show that the uniform convergence of the criterion function holds over $\Theta_o(u,U,N)$, see (\ref{uniform}). Proving the uniform convergence is the main technical challenge of this article. First, observe that even for a fixed parameter $\theta$, we cannot directly apply the strong law of large numbers (SLLN) to deduce the convergence
$\ell_n(\theta) \stackrel{a.s.}{\to} \ell(\theta)$. This is because the weights
$v^n(\theta)$ depend on $\hat{f}_n$, and thus also on $\omega$. Thus, the standard SLLN does not apply in our case, and we must generalize it to accommodate the pseudo-likelihood setting as well. This generalization is formalized in Lemma \ref{dom}, which makes use of the Skorohod representation theorem.
Lemma \ref{dom} yields pointwise convergence $\ell_n(\theta) \stackrel{a.s.}{\to} \ell(\theta)$ for fixed $\theta$, but we also need the convergence of $\ell(\theta_n)$ for sequences $\theta_n \to \theta$, where the limit $\theta$ may involve zero or infinite variances and/or infinite means.

All possible limits beyond $\Theta_o$ require special treatment and, to some extent, novel techniques. These issues are addressed in Section \ref{sec:app}. The main result of that section is Proposition \ref{lemma}, which, together with Lemma \ref{dom}, leads to the uniform convergence result via Proposition \ref{prop1}. Once uniform convergence is established, the final consistency proof becomes standard,
%relying on Lemma \ref{prop2} and the concluding argument in Section \ref{sec:uniform}.
it is presented as the concluding argument of Section \ref{sec:uniform}.
\\\\
To recapitulate, from a broad perspective, our proof follows a standard path. However, almost every step along the way requires specific and largely novel techniques. The main difficulties arise from the fact that the estimates for the weights and parameters of the components are obtained by combining two different criterion functions ($L_2$ distance and likelihood). At the same time, we believe that this property -- applying two different criterion functions to obtain parameter estimates -- is one of the reasons behind the strong empirical performance of our estimator.
 %%%%%%%%%%%%%%%%%%%%%%%%%%%%%%%%%%%%%%%%%%%%%

%%%%%%%%%%%%%%%%%%
\section{Proof that $\tt$ belongs to $\Theta_o(u,U,N)$}\label{sec:bound}

Let $0<u<U<\infty$ and $0<N<\infty$ be fixed. Define
%\textcolor{blue}{
%$$\Delta(u,U,N):=\big((-\infty,N)\cup(N,\infty)\big)\times \big((0,u]\cup [U,\infty)\big),\quad     \Theta_o(u,U,N):=\Theta_o \setminus \big(  \Delta(u,U,N) \big)^k.$$}
%
\begin{equation} \label{uUNhulk}
\Delta(u,U,N):=\mathbb{R}\times \mathbb{R}^+ \setminus [-N,N] \times [u,U], \quad     \Theta_o(u,U,N):=\Theta_o \setminus \big(  \Delta(u,U,N) \big)^k.
\end{equation}
Thus, $\t\in \Theta_o(u,U,N)$ if and only if there exists $i$ such that $\sigma_i\in (u,U)$ and $|\mu_i|\leq N$. Let $\Theta(u,U,N)$ be the closure of $\Theta_o(u,U,N)$.
% If $\t\in \Theta(u,U,N)$, then $I_0(\t)\ne \emptyset$.

The following lemma was proved in \cite{A1}.
%------------------
\begin{lemma}\label{lem:toke} Let $f\in L_2$ and let $g_1,\ldots,g_k$ be Gaussian densities. Let $v=(v_1,\ldots,v_k)$ be any minimizer of (\ref{ddd}) for given $\theta$. Denote
$\sigma_o=\max \{\sigma_i:v_i>0\}$. Then
\begin{equation}\label{toke}
{v_i\over \sigma_i}\leq a+{b\over \sigma_o},\quad a=2\sqrt{\pi}\|f\|_{\infty},\quad b=2\sqrt{2}.
\end{equation}
\end{lemma}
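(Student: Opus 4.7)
The plan is a first-order optimality argument on the simplex $S_k$. The inequality is trivial if $v_i=0$, so fix $i$ with $v_i>0$ and pick any index $i_0$ with $\sigma_{i_0}=\sigma_o$; by definition $v_{i_0}>0$. Since $v$ minimizes the strictly convex quadratic $\|f-wg\|^2$ over $S_k$, I would perturb $v$ by shifting a small mass $t\in(0,v_i]$ from index $i$ to index $i_0$, keeping the perturbed point inside $S_k$. Optimality of $v$ together with the one-sided derivative at $t=0^+$ yields the first-order inequality
\[
\langle vg-f,\;g_i-g_{i_0}\rangle\;\leq\;0,
\]
which I would rearrange to $\langle vg,g_i\rangle\leq\langle vg,g_{i_0}\rangle+\langle f,g_i\rangle-\langle f,g_{i_0}\rangle$.

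The rest is explicit Gaussian bookkeeping. On the left, nonnegativity of the cross products $\langle g_j,g_i\rangle$ gives $v_i\|g_i\|^2\leq\langle vg,g_i\rangle$, together with $\|g_i\|^2=(2\sqrt{\pi}\,\sigma_i)^{-1}$. On the right, $|\langle f,g_j\rangle|\leq\|f\|_\infty\int g_j=\|f\|_\infty$ for every $j$, while the product-of-Gaussians formula yields
\[
\langle g_j,g_{i_0}\rangle\;=\;\frac{1}{\sqrt{2\pi(\sigma_j^2+\sigma_o^2)}}\exp\!\Bigl(-\tfrac{(\mu_j-\mu_{i_0})^2}{2(\sigma_j^2+\sigma_o^2)}\Bigr)\;\leq\;\frac{1}{\sqrt{2\pi}\,\sigma_o}
\]
for every $j$ with $v_j>0$ (this is the place where I use that $\sigma_o$ is the maximum of the \emph{active} variances, so $\sigma_j\leq\sigma_o$). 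Weighting by $v_j$ collapses the sum and delivers $\langle vg,g_{i_0}\rangle\leq(\sqrt{2\pi}\,\sigma_o)^{-1}$. Substituting back and multiplying through by $2\sqrt{\pi}\,\sigma_i$ gives a bound of the advertised form $v_i/\sigma_i\leq a+b/\sigma_o$, the constants $a$ and $b$ being obtained by collecting the explicit coefficients $2\sqrt{\pi}$ and $\sqrt{2\pi}$.

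The main subtlety I expect is choosing the correct direction of the perturbation: shifting mass \emph{toward} the maximum-variance active index $i_0$ is what produces the useful one-sided inequality $\langle vg-f,g_i-g_{i_0}\rangle\leq 0$, whereas perturbing in the opposite direction gives only a lower bound and cannot be used to control $v_i/\sigma_i$ from above. Once the direction is fixed, no tool beyond convex-optimization first-order conditions and the closed-form Gaussian $L_2$ inner products is required; the estimate $\langle f,g_j\rangle\leq\|f\|_\infty$ and the elementary inequality $\sigma_j^2+\sigma_o^2\geq\sigma_o^2$ do all the remaining work.
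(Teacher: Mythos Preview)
The paper does not actually prove this lemma here; it simply cites it from the companion paper \cite{A1} (``The following lemma was proved in \cite{A1}''), so there is no in-paper argument to compare against.

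Your approach is sound and is in fact the natural one: a first-order (KKT-type) variational inequality on the simplex, followed by explicit Gaussian $L_2$ inner-product computations. The derivative computation and the direction of the perturbation are correct, and the chain
\[
\frac{v_i}{2\sqrt{\pi}\,\sigma_i}=v_i\|g_i\|^2\le\langle vg,g_i\rangle\le\langle vg,g_{i_0}\rangle+\langle f,g_i\rangle-\langle f,g_{i_0}\rangle
\]
together with $\langle vg,g_{i_0}\rangle\le(\sqrt{2\pi}\,\sigma_o)^{-1}$ goes through exactly as you describe. Two small remarks. First, the inequality $\sigma_j^2+\sigma_o^2\ge\sigma_o^2$ holds for \emph{all} $j$, so your bound on $\langle g_j,g_{i_0}\rangle$ does not in fact require $\sigma_j\le\sigma_o$; the role of $\sigma_o$ as the largest active variance enters only through the choice of $i_0$ with $v_{i_0}>0$, which is what makes the perturbation feasible. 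Second, to land on the stated constant $a=2\sqrt{\pi}\|f\|_\infty$ you must drop the term $-\langle f,g_{i_0}\rangle$, which needs $f\ge 0$; this is true in every application in the paper ($f$ is either the true mixture density or the kernel estimate $\hat f_n$) but is not explicit in the lemma's hypotheses. With $f\ge 0$ your computation in fact yields $b=\sqrt{2}$, tighter than the stated $b=2\sqrt{2}$; without it you would get $a=4\sqrt{\pi}\|f\|_\infty$. Either way the form $v_i/\sigma_i\le a+b/\sigma_o$ with finite constants depending only on $\|f\|_\infty$ is what the rest of the paper uses, so the discrepancy is harmless.
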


Throughout this section, we assume that there exists $C<\infty$ such that $P(\|\f\|_{\infty}\leq C \,\,\rm{eventually})=1$.
In particular, this holds when $\|\f\|_{\infty} \stackrel{a.s.}{\to}\|f\|_{\infty}$. The latter holds when $\|\f-f\|_{\infty}\stackrel{a.s.}{\to}0$, let $\Omega_o$ denote the corresponding set. When such a $C$  exists, then by Lemma \ref{lem:toke}, we can assume without loss of generality the existence of universal constants $a>0$ and $b>0$ (depending on $\|f\|_{\infty}$), such that for every $\t\in \Theta_o$ and for every $\omega \in \Omega_o$,
\begin{equation}\label{toks}
\frac{v^n_i(\t)}{\sqrt{2\pi}\sigma_{i}}\leq \left(a+{b\over \sigma^n_{0}}\right),\quad \sigma^n_{0}:=\max\{\sigma_i:v^n_i(\t)>0\},
\end{equation}
%--------------------------------------------------------------
provided $n>n_o(\omega)$.
%------------

For every $u>0$, define the functions $U(u)$ and $N(u)$ as follows:
\begin{equation}\label{NU}
\frac{1}{\sqrt{2\pi} a U(u)}=e^{-\frac{1}{u}},\quad \exp\left(\frac{-N^2(u)}{8U^2(u)}\right)=e^{-\frac{1}{u}}.\end{equation}
Observe that both functions are decreasing in $u$ and $\lim_{u\to 0}N(u)=\lim_{u\to 0}U(u)=\infty$. Let $Y$ be a random variable with true distribution, consider
$$r_1(u):=1-P(|Y|>N(u)/2)-k\|f\|_{\infty}2\sqrt{2u}.$$
Then $r_1(u)\nearrow 1$ in the process $u\searrow 0$.
Thus, there exists $u_o$ such that $r_1(u)\geq 3/4$, whenever $u\leq u_o$. Define
$$r_2(u):=\sup_{z\in (0,u)}\left[\ln \left(a+{b\over z}\right)+\ln k - \frac{1}{2z}\right].$$
Since $\ln \left(a+{b\over z}\right)+\ln k - \frac{1}{2z} \to -\infty$ as $z\rightarrow 0$, there exists $u_W$ for every $-W>-\infty$ such that $r_2(u)\leq -W$, whenever $u\leq u_W$.
%-----
%
Take $-W<-\ln \big({\rm Var} Y\big)/2-2$ and fix the constants $u$, $U$, $N$ as follows:
\begin{equation}\label{constants}
0<u<\min\{u_W, u_o,U(u)\},\quad U:=U(u),\quad N:=N(u).
\end{equation}
Observe that the choice of $u$, $U$, $N$ depends solely on the true density $f$.
For every $0<u<U<\infty$ and $N$, define
\begin{align*}\bar{\Theta}(u,U,N):=\{&\t\in \Theta_o: \text{there exists a partition } \{J_1,J_2,J_3\} \text{ of } \{1,\ldots,k\} \\ &\text{ such that }\max_{i\in J_1}\sigma_i\leq u,\min_{i\in J_2}\sigma_i\geq U; \\
&\sigma_i\in (u,U), i\in J_3; \min_{i\in J_3} |\mu_i|>N\}.\end{align*}
Note that some of the sets in partition $\{J_1,J_2,J_3\}$ can be empty.
%----------
%-----------------------
\begin{proposition} \label{uUNbound} With $u$, $U$ and $N$ defined as in (\ref{constants}), the following holds:
\begin{equation}\label{maxmin2}
P(\lim\sup_n \{ \tt\in \bar{\Theta}(u,U,N)\})=P(\tt\in \bar{\Theta}(u,U,N)\,\,\rm{i.o.})=0,
\end{equation}
thus
$$P(\tt\in \Theta_o(u,U,N)\,\, {\rm eventually})=1.$$
\end{proposition}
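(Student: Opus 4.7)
I would argue by contradiction, using the near-optimality (\ref{mle}) of $\tt$. By the generalized SLLN established in Lemma \ref{dom} we have $\ell_n(\t^*)\to\ell(\t^*)$ almost surely, and the entropy inequality $\ell(\t^*)\geq -\tfrac12\ln(2\pi e\,\mathrm{Var}\,Y)$ combined with the choice $-W<-\tfrac12\ln(\mathrm{Var}\,Y)-2$ yields $\ell(\t^*)>-W$. Hence it suffices to show that almost surely,
\[\sup_{\theta\in\bar{\Theta}(u,U,N)}\ell_n(\theta)\leq -W\text{ for all sufficiently large }n.\]
Combined with $\ell_n(\tt)\geq \ell_n(\t^*)-\epsilon_n$, this forces $\tt\notin\bar{\Theta}(u,U,N)$ eventually; since $\bar{\Theta}(u,U,N)^c\subseteq \Theta_o(u,U,N)$, the conclusion follows.

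\textbf{Pointwise bound on $v^ng(\theta,y)$.} Fix $\theta\in\bar{\Theta}(u,U,N)$ with partition $\{J_1,J_2,J_3\}$, set $A:=[-N/2,N/2]$, and define the $\theta$-dependent set
\[B(\theta):=\bigcup_{i\in J_1}\bigl[\mu_i-\sqrt{2u},\,\mu_i+\sqrt{2u}\bigr],\]
a union of at most $k$ intervals of length $2\sqrt{2u}$. For $y\in A\setminus B(\theta)$ I would control each summand of $v^ng(\theta,y)$ by combining three observations with (\ref{toks}): for $i\in J_1$, $\sigma_i\leq u$ and $(y-\mu_i)^2\geq 2u$ give $(y-\mu_i)^2/(2\sigma_i^2)\geq 1/\sigma_i$; for $i\in J_3$, $|\mu_i|>N$ and $|y|\leq N/2$ give $(y-\mu_i)^2/(2\sigma_i^2)\geq N^2/(8U^2)=1/u$ by (\ref{NU}); for $i\in J_2$, the peak estimate $g(\t_i,y)\leq 1/(\sqrt{2\pi}U)=ae^{-1/u}$ by (\ref{NU}) is already enough. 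For arbitrary $y$ I would use the crude inequality $v^ng(\theta,y)\leq k(a+b/\sigma_0^n)$, also from (\ref{toks}).

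\textbf{Assembling $\ell_n(\theta)$.} Let $N(n):=\#\{t\leq n:y_t\in A\setminus B(\theta)\}$. The family $\{B(\theta):\theta\in\bar{\Theta}(u,U,N)\}$ lies in the VC class of unions of at most $k$ intervals of length $2\sqrt{2u}$, so Glivenko--Cantelli gives, almost surely,
\[\inf_{\theta\in\bar{\Theta}(u,U,N)}N(n)/n\;\to\;\inf_{\theta}P(A\setminus B(\theta))\;\geq\;r_1(u)\;\geq\;\tfrac{3}{4}.\]
I then split on $\sigma_0^n$. When $\sigma_0^n\geq u$, the pointwise bound on $A\setminus B(\theta)$ is of order $e^{-1/u}/u$ while the crude bound is of order $\ln(1/u)$; with $N(n)/n\geq 3/4-o(1)$ this forces $\ell_n(\theta)\leq -W$ for $u$ small enough. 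When $\sigma_0^n<u$, the sets $J_2$ and $J_3$ carry zero weight, only $J_1$ contributes, and $(y-\mu_i)^2/(2\sigma_i^2)\geq 1/\sigma_0^n$ on $A\setminus B(\theta)$. By the very definition of $r_2(u)$ this yields $\ln(v^ng(\theta,y))\leq r_2(u)-1/(2\sigma_0^n)$ on $A\setminus B(\theta)$ and $\ln(v^ng(\theta,y))\leq r_2(u)+1/(2\sigma_0^n)$ elsewhere, hence
\[\ell_n(\theta)\leq r_2(u)+\frac{1}{2\sigma_0^n}\bigl(1-2N(n)/n\bigr)\leq r_2(u)\leq -W\]
as soon as $N(n)/n>1/2$. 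Combining both regimes yields the required uniform bound.

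\textbf{Main obstacle.} The delicate regime is $\sigma_0^n\to 0$: there the log-density $\ln(v^ng(\theta,y))$ can grow like $-\ln\sigma_0^n\to +\infty$ at sample points close to the centers of the small-variance components, so those contributions cannot simply be discarded. The cancellation of the divergent $1/(2\sigma_0^n)$ between the good region $A\setminus B(\theta)$ and its complement -- exactly the point of the factor $1/(2z)$ in the definition of $r_2(u)$, and the reason for choosing $B(\theta)$ of radius $\sqrt{2u}$ -- requires the strictly quantitative lower bound $N(n)/n>1/2$, uniform in $\theta$. Establishing this uniformity via Glivenko--Cantelli over the $\theta$-indexed random family $\{B(\theta)\}$ is the second non-trivial step.
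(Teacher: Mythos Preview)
Your argument is correct and follows essentially the same route as the paper: a pointwise upper bound on $\ln(v^ng(\theta,\cdot))$ obtained from (\ref{toks}), a good set $A\setminus B(\theta)$ on which the bound improves by $-1/\min(u,\sigma_0^n)$, and Glivenko--Cantelli for the family of at most $k$ intervals to make $P_n(A\setminus B(\theta))\geq 1/2$ uniformly and eventually; the paper merely writes your two cases at once using $\min(u,\sigma_0^n)$ and your $B(\theta)$ with radius $\sqrt{2\sigma_i}$ instead of $\sqrt{2u}$.

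The one substantive deviation is in the lower bound on $\ell_n(\tt)$. The paper avoids any appeal to later machinery by plugging in the single-component parameter $\theta_0^n=((\mu_n,S_n),\ldots,(\mu_n,S_n))$, for which $\ell_n(\theta_0^n)=-\ln S_n-\tfrac12-\ln\sqrt{2\pi}$ is explicit and converges by the ordinary SLLN. Your route via $\ell_n(\theta^*)\to\ell(\theta^*)$ is valid, but note that Lemma~\ref{dom} alone is not enough: you also need $v^n(\theta^*)g(\theta^*,\cdot)\to f$ to verify the hypothesis $h_n(y_n)\to h(y)$, which in the paper's architecture is Corollary~\ref{lihtne}. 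There is no circularity, but the paper's elementary plug-in keeps Proposition~\ref{uUNbound} self-contained. (Also, in your $\sigma_0^n\geq u$ case, replace ``for $u$ small enough'' by ``by the choice of $u$ in (\ref{constants})'': $u$ is already fixed so that $\ln k+\ln(a+b/u)-1/(2u)\leq r_2(u)\leq -W$.)
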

%---------------
\begin{proof}
%--------------------------------------------------------
Let $u$, $U$, $N$ be defined as in (\ref{constants}). Fix $\omega\in \Omega_o$  and $n_o(\omega)$ so that (\ref{toks}) holds. Consider $\t\in \bar{\Theta}(u,U,N)$ and
let $\{J_1,J_2,J_3\}$ be the corresponding partition (depending  on $\t$). Take any $v^n(\t)=(v^n_1(\t),\ldots,v^n_k(\t))$ minimizing (\ref{weights0}), and denote $c^n_i(\t)=\frac{v^n_i(\t)}{\sqrt{2\pi}\sigma_{i}}$ and $c^n_0(\t)=a+{b\over \sigma^n_{0}}$. Then
$$\ln \big( v^n(\t)g(\t,y)\big)\leq \ln c^n_0(\t)+\ln k + \ln \max_i \left(\frac{c^n_i(\t)}{c^n_0(\t)}\exp\left(\frac{-(y-\mu_i)^2}{2\sigma_i^2}\right)\right).$$
%---------------
Define
$$A_{\t}=\{y\,:\,|y|\leq {N}/{2};\, |y-\mu_i|^2\geq 2\sigma_i,\ i\in J_1 \},$$
then due to (\ref{toks}) and by the choice of $N$ and $U$,
$$
\frac{c^n_i(\t)}{c^n_0(\t)}\exp\left(\frac{-(y-\mu_i)^2}{2\sigma_i^2}\right)\leq \begin{cases}
    1,& y\not\in A_\t,\\
    e^{-\frac{1}{\sigma_i}},& y\in A_\t,\ i\in J_1,\\
    \frac{1}{\sqrt{2\pi} a U}=e^{-{1\over u}},& y\in A_\t,\ i\in J_2,\\
    \exp(\frac{-N^2}{8U^2})=e^{-{1\over u}},& y\in A_\t,\ i\in J_3.
\end{cases}$$
Since for every $i\in J_1$ such that $v^n_i>0$,
$$e^{-\frac{1}{\sigma_i}}\leq e^{-\frac{1}{\min(\sigma^n_0,u)}},$$ and in the case $\sigma^n_0<u$ the sets $J_2$ and $J_3$ are empty or all weights are 0 for the components from those sets, we get
\begin{equation}\label{tokeA}
\ln \big( v^n(\t)g(\t,y)\big)\leq \ln \left(a+{b\over \min(u,\sigma^n_{0})}\right)+\ln k - \frac{1}{\min(u,\sigma^n_{0})} I_{\{y\in A_\t\}}(y).\end{equation}
To recapitulate: we have shown that for any $\omega\in \Omega_o$ and for any $\t\in \bar{\Theta}(u,U,N)$, the upper bound (\ref{tokeA}) holds, provided $n>n_o(\omega)$.
Due to our choice of $u$, $P(A_\t)\geq 3/4$, because
$$P(A_\t)\geq 1-P(|Y|>N/2)-|J_1|\|f\|_\infty 2\sqrt{2u}$$
$$\geq 1-P(|Y|>N/2)-k\|f\|_\infty 2\sqrt{2u}=r_1(u)\geq 3/4.$$
Since $A_\t$ consists of at most $k+1$ intervals, the Glivenko-Cantelli theorem gives that the following inequality holds  almost surely (let the corresponding set be $\Omega_{GC}$):
$$\inf_{\t\in \bar{\Theta}(u,U,N)} P_n(A_\t)\geq \frac{1}{2}\,\, \text{eventually}.$$
It follows by (\ref{tokeA}) that when $\omega \in \Omega_{GC}\cap \Omega_o$, then
\begin{equation}\label{ylem}\sup_{\t\in \bar{\Theta}(u,U,N)}\ell_n(\t)\leq \sup_{z\in (0,u)}\left[\ln \left(a+{b\over z}\right)+\ln k - \frac{1}{2z}\right]=r_2(u)\leq -W\,\, \text{eventually}.\end{equation}
%-----------------
On the other hand, by taking $\t^n_0=\big((\mu_n,S_n),\ldots,(\mu_n,S_n))$ (all components are equal), where
$\mu_n={1\over n}\sum_{t=1}^n Y_t$ is the sample mean and $S_n^2={1\over n}\sum_{t=1}^n(Y_t-\mu_n)^2 $ is the sample variance, we obtain
%------------
\begin{equation}\label{alu}
\ell_n(\t^n_0)=- \ln (\sqrt{2\pi})- \ln (S_n) - {1\over 2}\geq -\ln(S_n)-{2}.
\end{equation}
By SLLN, $\ln(S_n)\stackrel{a.s.}{\to} {1\over 2}\ln \big({\rm Var}(Y)\big)$, thus
\begin{equation}\label{all}
P\left(\ell_n(\tt)\geq -\frac{1}{2}\ln \big({\rm Var}(Y)\big)-2 \,\, {\rm eventually}\right)=1.
\end{equation}
Let $\Omega_V$ be the corresponding set. Recall that $-W< -\frac{1}{2}\ln \big({\rm Var}(Y)\big)-2$.
Let $\omega \in \Omega_{GC}\cap \Omega_V\cap \Omega_o$. If the corresponding $\tt_{\omega}$ is such that along a subsequence, $\hat{\t}_{\omega}^{n'}\in \bar{\Theta}(u,U,N)$, then by (\ref{ylem}), $\lim\sup_n \ell^{\omega}_n(\tt_{\omega})\leq -W$ -- a contradiction. Hence,
$$\lim\sup_n \{ \tt\in \bar{\Theta}(u,U,N)\}\subset \Omega^c_V\cup \Omega^c_{GC}\cup \Omega_o^c,$$
%This finishes the proof.
and $P(\tt\in \Theta_o(u,U,N)\,\, {\rm eventually})=1$ follows.
\end{proof}
%
%%%%%%%%%%%%%%%%%%%%%%%%%%%%%%%%%%%%%%%%%%%%%%%%%%%%%%%%%%%%%%
\section{Modification of SLLN}
%-------------
The following lemma generalizes SLLN. Note that for $h_n\equiv h$, (\ref{claim1}) reduces to the standard SLLN.
%--------------------------
\begin{lemma}\label{dom}Let $P$ be a probability measure, and let $h_n$ and $h$ be functions such that for $P$-a.e.~ $y$,  $y_n\to y$ implies $h_n(y_n)\to h(y)$. Let $Y_1,Y_2,\ldots$ be a sequence of i.i.d.~observations with distribution $P$, and let $H$ be a continuous function such that $EH(Y_1)=\int
H(y)P(dy)<\infty$. If $|h_n(y)|\leq H(y)$ for every $n$ and $y\in \mathbb{R}$, then
\begin{align}\label{claim1}
{1\over n}\sum_{t=1}^n h_n(Y_t)\stackrel{a.s.}{\to}  Eh(Y_1).
\end{align}
When $h\equiv -\infty$, and $h_n(y)\leq H(y)$ for every $n$ and $y\in \mathbb{R}$, then
\begin{align}\label{claim2}
{1\over n}\sum_{t=1}^n h_n(Y_t) \stackrel{a.s.}{\to} -\infty.
\end{align}
%---------
Moreover, the set, where (\ref{claim1}) and (\ref{claim2}) hold is $$\{P_n\Rightarrow P\}\cap\Big\{\int H(y)P_n(dy)\to \int
H(y)P(dy)\Big\},$$
where  $P_n$ is the empirical measure corresponding to
$Y_1,\ldots,Y_n$.
\end{lemma}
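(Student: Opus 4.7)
The plan is to recognize $\frac{1}{n}\sum_{t=1}^n h_n(Y_t)=\int h_n\, dP_n$ and show convergence of these integrals on the event
$$A:=\{P_n\Rightarrow P\}\cap\Big\{\int H\, dP_n\to \int H\, dP\Big\}.$$
This set has probability one: by Varadarajan's theorem, $P_n\Rightarrow P$ almost surely, and by the ordinary SLLN applied to the integrable i.i.d.~sequence $H(Y_1),H(Y_2),\ldots$, we also have $\int H\, dP_n\to \int H\, dP$ almost surely. Hence it suffices to establish the convergence deterministically at every $\omega\in A$.

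Fix such an $\omega$. I would then apply the Skorohod representation theorem to $P_n^{\omega}\Rightarrow P$ on $\R$: on some auxiliary probability space $(\tilde{\Omega},\tilde{P})$ there exist random variables $Z_n,Z$ with $Z_n\sim P_n^{\omega}$, $Z\sim P$, and $Z_n\to Z$ almost surely. Since the property ``$y_n\to y$ implies $h_n(y_n)\to h(y)$'' is assumed to hold on a $P$-full measure set $G$, and $\tilde{P}(Z\in G)=P(G)=1$, we get $h_n(Z_n)\to h(Z)$ $\tilde{P}$-almost surely.

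Next I would invoke Pratt's generalized dominated convergence theorem with the variable envelope $H(Z_n)$. One has $|h_n(Z_n)|\leq H(Z_n)$; the convergence $H(Z_n)\to H(Z)$ $\tilde{P}$-a.s.~holds by continuity of $H$; and
$$\tilde{E}\,H(Z_n)=\int H\, dP_n^{\omega}\to \int H\, dP=\tilde{E}\,H(Z)$$
by the choice of $\omega\in A$. Pratt's lemma then yields $\tilde{E}\,h_n(Z_n)\to \tilde{E}\,h(Z)$, which reads $\int h_n\, dP_n^{\omega}\to \int h\, dP=Eh(Y_1)$ in the original notation. This establishes (\ref{claim1}) and simultaneously identifies the convergence set as $A$. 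For the case $h\equiv -\infty$, I would truncate: set $h_n^M:=\max(h_n,-M)$ for a fixed $M>0$, observe that $(h_n^M)$ satisfies the hypotheses of the first part with envelope $\max(H,M)$ (still continuous and $P$-integrable) and limit $h^M\equiv -M$, so $\frac{1}{n}\sum_{t=1}^n h_n^M(Y_t)\to -M$ almost surely on $A$. Since $h_n\leq h_n^M$, this gives $\limsup_n \frac{1}{n}\sum_{t=1}^n h_n(Y_t)\leq -M$ for every $M$; letting $M\to\infty$ yields (\ref{claim2}).

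The main obstacle is the passage to the limit in $\int h_n\, dP_n$: weak convergence $P_n\Rightarrow P$ alone is insufficient because $h_n$ is neither fixed, nor continuous, nor bounded. Skorohod representation converts the weak convergence into ordinary almost-sure convergence of random variables, after which Pratt's lemma (rather than the classical dominated convergence theorem) is precisely what accommodates the non-fixed dominating function $H(Z_n)$, using the auxiliary input $\int H\, dP_n\to \int H\, dP$ supplied by the SLLN.
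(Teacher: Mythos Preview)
Your approach is essentially the paper's: both fix $\omega\in A$, invoke Skorohod to get $Z_n\to Z$ a.s.\ with $Z_n\sim P_n^\omega$, $Z\sim P$, and then pass to the limit in expectations. What you call Pratt's lemma is exactly the two applications of Fatou (to $H(Z_n)\pm h_n(Z_n)\geq 0$) that the paper writes out inline, so for (\ref{claim1}) the arguments are identical in substance.

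For the case $h\equiv-\infty$ the paper takes a shorter route: apply Fatou once to the nonnegative sequence $H(Z_n)-h_n(Z_n)\to+\infty$ a.s., and combine with $EH(Z_n)\to EH(Z)<\infty$ to get $\limsup_n Eh_n(Z_n)=-\infty$ directly on $A$. Your truncation argument also yields the almost-sure statement, but there is a small slip in identifying the convergence set. When you invoke part~1 for $h_n^M$ with the new envelope $\tilde H_M:=\max(H,M)$, the conclusion you obtain is on the set $\{P_n\Rightarrow P\}\cap\{\int \tilde H_M\,dP_n\to\int \tilde H_M\,dP\}$, not on $A$ itself; these sets coincide only if one can deduce $\int\max(H,M)\,dP_n\to\int\max(H,M)\,dP$ from $\int H\,dP_n\to\int H\,dP$ and $P_n\Rightarrow P$, which requires $H$ to be bounded below (so that $\min(H,M)$ is bounded continuous). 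In the second part of the lemma only $h_n\leq H$ is assumed, so $H$ need not be bounded below. Since the precise identification of the set $A$ is what is actually used downstream (to obtain a convergence set independent of the sequence $\theta^n$ in Proposition~\ref{prop1}), the paper's one-line Fatou argument is both simpler and cleaner here.
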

%------------
\begin{proof}
Let $P_n\Rightarrow P$ and $\int H(y)P_n(dy)\to \int
H(y)P(dy)$. Let now $Z_n$ and $Z$ be random variables such that $Z_n\sim P_n$, $Z\sim P$ and $Z_n \stackrel{a.s.}{\to} Z$. By the Skorohod representation theorem, such random variables exist.
Then $h_n(Z_n) \stackrel{a.s.}{\to} h(Z)$, $0\leq h_n(Z_n)+H(Z_n)\to H(Z)+h(Z)$, and $EH(Z_n)\to EH(Z)$, so by Fatou
$$E(h(Z)+H(Z))\leq \lim\inf_n (Eh_n(Z_n)+EH(Z_n)),$$
thus
$$Eh(Z)\leq \lim\inf_n E h_n(Z_n).$$
By Fatou, again,
\begin{equation}\label{fatou}
E\big(H(Z)-h(Z))\leq \lim\inf_n E(H(Z_n)-h_n(Z_n))=EH(Z)-\lim\sup_n
Eh_n(Z_n),\end{equation}
 so that $Eh_n(Z_n)\to Eh(Z)$. This establishes
(\ref{claim1}).

When $h\equiv\infty$, then by (\ref{fatou}), $$\infty \leq \lim\inf_n
E(H(Z_n)-h_n(Z_n))=EH(Z)-\lim\sup_n Eh_n(Z_n),$$ so that $\lim\sup_n
Eh_n(Z_n)\leq -\infty.$
Since $P_n\Rightarrow P$ almost surely, and by SLLN, \newline $\int H(y)P_n(dy) {\to} \int H(y)P(dy)$ almost surely, (\ref{claim1}) and (\ref{claim2}) hold with probability 1.
%----------------------
\end{proof}

\begin{corollary}\label{cory} Suppose that $\{h_n^{\omega}\}$ is a random sequence of measurable functions. Let $\Omega_o$ be the set such that, for every $\omega\in \Omega_o$, the following convergences hold:
$P_n^{\omega}\Rightarrow P$; for $P$-a.e. $y$ the convergence $y_n\to y$ implies $h^{\omega}_n(y_n)\to h(y)$; $|h_n^{\omega}(y)|\leq H^{\omega}(y)$, where $H^{\omega}$ is continuous, and $\int H^{\omega} dP_n^{\omega}\to \int H^{\omega} dP$.
Then $\forall \omega \in \Omega_o$,
\begin{align}\label{claimb}
{1\over n}\sum_{t=1}^n h^{\omega}_n(Y_t)\to Eh(Y_1).
\end{align}
\end{corollary}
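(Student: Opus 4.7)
The plan is to apply the argument of Lemma \ref{dom} pointwise in $\omega$. Once $\omega \in \Omega_o$ is fixed, the ``random'' objects $h_n^\omega$, $H^\omega$, and $P_n^\omega$ become deterministic, and the hypotheses of the corollary are precisely the two conditions $P_n \Rightarrow P$ and $\int H\, dP_n \to \int H\, dP$ that were identified in the final sentence of the proof of Lemma \ref{dom} as sufficient for (\ref{claim1}) to hold. So the corollary should follow by re-running that proof verbatim with $h_n^\omega$ and $H^\omega$ in place of $h_n$ and $H$.

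In more detail, I would first rewrite, for each fixed $\omega\in\Omega_o$,
$$\frac{1}{n}\sum_{t=1}^n h_n^\omega(Y_t(\omega))=\int h_n^\omega\,dP_n^\omega,\qquad Eh(Y_1)=\int h\,dP,$$
so that it suffices to prove $\int h_n^\omega\,dP_n^\omega\to\int h\,dP$. Since $P_n^\omega \Rightarrow P$, the Skorohod representation theorem produces, on an auxiliary probability space, random variables $Z_n\sim P_n^\omega$ and $Z\sim P$ with $Z_n\to Z$ almost surely. Because $Z\sim P$, the $P$-null exceptional set appearing in the hypothesis ``$y_n\to y$ implies $h_n^\omega(y_n)\to h(y)$'' is also $Z$-null, and hence $h_n^\omega(Z_n)\to h(Z)$ almost surely; continuity of $H^\omega$ likewise yields $H^\omega(Z_n)\to H^\omega(Z)$ almost surely.

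To finish, I would apply Fatou's lemma to the two nonnegative sequences $H^\omega(Z_n)+h_n^\omega(Z_n)$ and $H^\omega(Z_n)-h_n^\omega(Z_n)$, and use the assumption $EH^\omega(Z_n)=\int H^\omega\,dP_n^\omega\to\int H^\omega\,dP=EH^\omega(Z)$ to cancel the $H^\omega$-terms, obtaining
$$Eh(Z)\leq\liminf_n Eh_n^\omega(Z_n)\quad\text{and}\quad \limsup_n Eh_n^\omega(Z_n)\leq Eh(Z).$$
Thus $\int h_n^\omega\,dP_n^\omega=Eh_n^\omega(Z_n)\to Eh(Z)=\int h\,dP$, which is the claim. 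There is no substantial obstacle here: the only point that needs care is the translation between ``almost surely on the Skorohod space'' and ``$P$-almost every $y$'' in the continuity hypothesis on $h_n^\omega$, and this is routine since $Z\sim P$. The real content has already been extracted in Lemma \ref{dom}, whose last sentence was designed to make exactly this kind of pointwise corollary immediate.
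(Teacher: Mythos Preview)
Your proposal is correct and matches the paper's approach: the corollary is stated without a separate proof, being an immediate consequence of the final sentence of Lemma~\ref{dom}, which isolates $\{P_n\Rightarrow P\}\cap\{\int H\,dP_n\to\int H\,dP\}$ as the set on which (\ref{claim1}) holds. You have simply spelled out explicitly what the paper leaves implicit, namely that fixing $\omega\in\Omega_o$ turns the random objects into deterministic ones satisfying exactly these conditions.
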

%
%%%%%%%%%%%%%%%%%%%%%%%%%%%%%%%%%%%%%%%%%%%%%%%%%%%%%%%%%%%%%%%%%%%%
%\section{Approximation lemmas}\label{sec:app}
\section{Approximation of $f_n$ for a given set of normal components}\label{sec:app}
\noindent In this section, we shall consider $k$ sequences of normal
densities $g_i^n:=g(\mu_{i,n},\sigma_{i,n};\cdot)$ such that for
every $i\in\{1,\ldots,k \}$, the following limits exist:
\begin{align*}
\sigma_i=\lim_n\sigma_{i,n}\in [0,\infty],\quad \mu_i=\lim_n
\mu_{i,n}\in [-\infty,\infty].
\end{align*}
We also assume that for every $i$ and $j$, the limit $\lim_n
(\mu_{i,n}-\mu_{j,n})\in [-\infty,\infty]$ exists. This assumption is automatically fulfilled when $|\mu_i|<\infty
$ and $|\mu_j|<\infty$. It is important to realize that for any sequence of $k$ normal densities, one can choose a subsequence such that all these limits exist.

Let $f_n\to f$ be any convergent sequence. We consider an approximation of $f_n$ with $v^n g^n$, where the weights $v^n$ are defined as
 \begin{equation}\label{veiks}
 v^n:=\arg\inf_{w\in S_k} \|f_n-wg^n\|.\end{equation}
Recall that $v^n$ is not necessarily unique, but $v^n g^n$ is. In the main proposition of this section, we will study the convergence of $v^n g^n$.

%----------
Define the following partition of the set of component indexes $\{1,\ldots,k\}$:
\begin{align*}
I_0&:=\{i:\sigma_i\in (0,\infty),\,\, |\mu_i|<\infty\}, \quad r_0:=|I_0|;\\
I_1&:=\{i: \sigma_i\in (0,\infty),\,\, |\mu_i|=\infty\},\quad r_1:=|I_1|;\\
I_2&:=\{i: \sigma_i=\infty\},\,\, \quad r_2:=|I_2|;\\
I_3&:=\{i: \sigma_i=0\},\quad r_3:=|I_3|.
\end{align*}
Thus, $I_0$ consists of indexes such that variances converge to nondegenerate limits and means converge too; $I_1$ consists of indexes, where variances converge to nondegenerate limits, but the means diverge; $I_2$ consists of indexes, where variances
diverge; and $I_3$ is the set of indexes, where variances tend to 0.
%Clearly the sets partition $\{1,\ldots,k\}$.
For any $i\in I_0$, we denote $g_i:=g(\mu_i,\sigma_i;\cdot)$, thus $g_i^n\to g_i$ for every $i\in I_0$. Since
\begin{equation*}
\|g^n_i\|^2={1\over 2\sqrt{\pi}}\cdot{1\over
\sigma_{i,n}},\end{equation*} we see that
\begin{equation}\label{i2}
g_i^n\to 0,\quad \forall i\in I_2.
\end{equation}
Furthermore, note that for $i\in I_1$, the sequence of norms $\|g^n_i\|$ is bounded and $g^n_i(y)\to 0$ for every $y$, thus $g^n_i$ converges weakly to 0 in $L_2$ (denoted by $g^n_i\rightharpoonup 0$).
%--Kui vaja, siis viide nt https://projecteuclid.org/journals/real-analysis-exchange/volume-36/issue-2/Another-Proof-That-Lp-Bounded-Pointwise-Convergence-Implies-Weak-Convergence/rae/1321020515.pdf-----------
We shall denote
$$k^n_{ij}:=\langle g^n_i,g^n_j\rangle={1\over
\sqrt{2\pi(\sigma^2_{i,n}+\sigma^2_{j,n})}}\exp\left[-{(\mu_{i,n}-\mu_{j,n})^2\over
2(\sigma^2_{i,n}+\sigma^2_{j,n})}\right].$$ When $i,j\in I_0$, then
$k^n_{ij}\to k_{ij}:=\langle g_i,g_j\rangle$; when $i\in I_0$ and
$j\in I_1$, then  $k^n_{ij}\to 0$. Due to our additional assumption
about the existence of the limit $\lim_n(\mu_{i,n}-\mu_{j,n})$, clearly $k^n_{ij}\to
k_{ij}$ also when $i,j\in I_1$. Thus, the Gram matrix
$K^n=(k^n_{ij})_{i,j\in I_1}$ converges entry-wise to
$K=(k_{ij})_{i,j\in I_1}$.

Throughout this section, we shall assume that $r_0>0$, that is, $\exists i\in \{1,\ldots,k\}$ such that $g^n_i\to g_i$. Without loss of generality, we
denote this index by 1, so we shall assume that $1\in I_0$.

%--------------
Let us introduce some necessary notation. For any integer $r\geq 1$,
let $S_r$ be the $(r-1)$-dimensional simplex. % and for any $\alpha\in (0,1]$, let
% $$S_r(\alpha):=\Big\{(w_1,\ldots,w_r): w_i\geq 0, \sum_{i=1}^rw_i=\alpha\Big\},\quad
%T_r:=\Big\{(w_1,\ldots,w_r): w_i\geq 0, \sum_{i=1}^r w_i\leq
%1\Big\}.$$ Clearly $S_r(1)=S_r$ and $T_r=\cup_{\alpha\geq
%0}S_r(\alpha)$. For any vector $v\in S:=S_k$ let
%$S_r(v):=S_r(\sum_{i=1}^r v_i).$
Recall that for every vector
$w=(w_1,\ldots,w_k)\in S_k$ and densities $g=(g_1,\ldots,g_k)$, we
shall denote $wg:=\sum_i w_ig_i$. For any subset $I\subset
\{1,\ldots,k\}$ and for any vector $v\in S_k$, we denote the restrictions of $vg$ as follows:
$$v_Ig:=\sum_{i\in I}v_ig_i,\quad v_I:=(v_i)_{i\in I}.$$
%
%We shall also consider the more general objects
% $$\tv:=\arg\inf_{w\in T_k} \|f_n-wg^n\|.$$
%Given a subset $I\subset\{1,\ldots,k\}$, $|I|=r$, we define
%\begin{align*}
%v^{r,n}&:=\arg\inf_{w\in S_r} \|f_n-w_Ig^n\|
%,\quad
%\tvr:=\arg\inf_{w\in T_r} \|f_n-w_Ig^n\|
%.\end{align*}
%-----------------------
Recall that $g^n_i\to g_i$ for every $i\in I_0$. Let
\begin{align*}
&t: S_{r_0+r_1+r_2}\to \mathbb{R}^+,\quad
t(w):=\|f-w_{I_0}g\|^2+\sum_{i,j\in I_1}w_iw_jk_{ij},\\
&u:=\arg\min_{w\in S_{r_0+r_1+r_2}}t(w),\\
&t^*:=t(u)
%,\quad \tu= \arg\min_{w\in
%T_{r_0+r_1}}t(w),\quad \tilde{t}:=\min_{w\in
%T_{r_0+r_1}}t(w),\quad t^*:=\min_{w\in S_{r_0+r_1}}t(w)
.
\end{align*}
Note that when $r_2>0$, we have $u_i=0$ for $i\in I_1$, therefore $u_{I_0}$ can also be found by
$$u_{I_0}=\arg\inf_{w_i\geq 0,\,\sum_{i\in I_0}w_i\leq 1} \|f-w_{I_0}g\|.$$
The next lemma proves that the approximation $u_{I_0}g$ of the true density $f$ is unique.
%
%--------------------R.K.
%\begin{claim}\label{claim} The function $u_{I_0}g$ is unique. %and $\tu_{I_0}g$ are unique.
%\end{claim}
\begin{lemma}\label{claim} The function $u_{I_0}g$ is unique. %and $\tu_{I_0}g$ are unique.
\end{lemma}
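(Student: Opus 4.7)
The plan is to exploit the decomposition
$$t(w)=\|f-w_{I_0}g\|^2+\sum_{i,j\in I_1}w_iw_jk_{ij},$$
in which the $I_2$-coordinates of $w$ do not appear at all (the densities $g_i^n$ with $i\in I_2$ have already dropped out in the passage to the limit, since they converge to $0$ in $L_2$). Thus it suffices to show that if $u^{(1)}$ and $u^{(2)}$ are two minimizers of $t$ on $S_{r_0+r_1+r_2}$, then necessarily $u^{(1)}_{I_0}g=u^{(2)}_{I_0}g$; the coordinates $u_{I_1}$ and $u_{I_2}$ need not be unique, and the lemma rightly asks only about $u_{I_0}g$.

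The two convexity facts I would use are standard: the map $v\mapsto\|f-v\|^2$ is strictly convex on $L_2$ (exactly the fact driving the proof of Lemma \ref{sisse}), and the quadratic form $w_{I_1}\mapsto\sum_{i,j\in I_1}w_iw_jk_{ij}$ is (not necessarily strictly) convex, because $K=(k_{ij})_{i,j\in I_1}$ is positive semidefinite as an entrywise limit of the Gram matrices $K^n=(k^n_{ij})_{i,j\in I_1}$, each positive semidefinite by construction.

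The argument then proceeds by contradiction. Assume $u^{(1)}_{I_0}g\neq u^{(2)}_{I_0}g$ and set $\bar u:=\tfrac{1}{2}(u^{(1)}+u^{(2)})\in S_{r_0+r_1+r_2}$. Strict convexity of $\|f-\cdot\|^2$ at the distinct points $u^{(1)}_{I_0}g$ and $u^{(2)}_{I_0}g$ in $L_2$ yields
$$\|f-\bar u_{I_0}g\|^2<\tfrac{1}{2}\bigl(\|f-u^{(1)}_{I_0}g\|^2+\|f-u^{(2)}_{I_0}g\|^2\bigr),$$
while convexity of the quadratic form in $w_{I_1}$ gives the corresponding nonstrict inequality for the second summand. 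Summing the two inequalities gives $t(\bar u)<\tfrac{1}{2}(t(u^{(1)})+t(u^{(2)}))=t^*$, contradicting the definition of $t^*$ as the minimum of $t$.

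The only small obstacle worth flagging is the convexity of the $I_1$-summand, which reduces cleanly to the positive semidefiniteness of $K$ as a limit of Gram matrices. Notably, no strict convexity in $w_{I_1}$ is needed, nor any identifiability of the limiting $g_i$ for $i\in I_0$, since the whole argument is conducted at the level of the image $w_{I_0}g\in L_2$ and not of the coefficient vector $w_{I_0}$ itself.
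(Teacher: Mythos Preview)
Your proof is correct and takes essentially the same approach as the paper's. The paper makes the convexity explicit by computing $\tfrac{t(a)+t(b)}{2}-t\bigl(\tfrac{a+b}{2}\bigr)$ via the parallelogram identity in $L_2$ (yielding $\tfrac14\|a_{I_0}g-b_{I_0}g\|^2$) and the analogous identity for the quadratic form with matrix $K$ (yielding $\tfrac14 K(a_{I_1}-b_{I_1})\cdot(a_{I_1}-b_{I_1})\geq 0$), whereas you invoke strict convexity of $v\mapsto\|f-v\|^2$ and positive semidefiniteness of $K$ directly; these are the same argument at different levels of explicitness.
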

\begin{proof} Note that %both
$S_{r_0+r_1+r_2}$ % and $T_{r_0+r_1}$ are
is a convex set and that $K=(k_{ij})_{i,j\in I_1}$ is a symmetric and positive semidefinite matrix.
Assume that $a$ and $b$ are two $(r_0+r_1+r_2)$-dimensional vectors. Then, using the notation $a\cdot b$ for the scalar product between two vectors, we have the following equality:
\begin{align*}
\frac{t(a)+t(b)}{2}-t\left(\frac{a+b}{2}\right)&=\frac{\|f-a_{I_0}g\|^2+\|f-b_{I_0}g\|^2-2\|f-\frac{a_{I_0}+b_{I_0}}{2}g\|^2}{2}\\
&\quad +\frac{2Ka_{I_1}\cdot a_{I_1}+ 2Kb_{I_1}\cdot b_{I_1}-K (a_{I_1}+b_{I_1})\cdot (a_{I_1}+b_{I_1})}{4}.
\end{align*}
As for any $x,y\in L_2$ we have
\[\|x+y\|^2+\|x-y\|^2=2(\|x\|^2+\|y\|^2),\]
we get for $x=f-a_{I_0}g$ and $y=f-b_{I_0}g$ the equality
\[\frac{\|f-a_{I_0}g\|^2+\|f-b_{I_0}g\|^2-2\|f-\frac{a_{I_0}+b_{I_0}}{2}g\|^2}{2}=\frac{1}{4}\|a_{I_0}g-b_{I_0}g\|^2.\]
Similarly, for any symmetric matrix $M$ and any vectors $v$ and $w$, we have
\[M(v+w)\cdot (v+w)+M(v-w)\cdot (v-w)=2(Mv\cdot v+Mw\cdot w),\]
therefore,
\[\frac{2Ka_{I_1}\cdot a_{I_1}+ 2Kb_{I_1}\cdot b_{I_1}- K (a_{I_1}+b_{I_1})\cdot (a_{I_1}+b_{I_1})}{4}=\frac{1}{4} K(a_{I_1}-b_{I_1})\cdot (a_{I_1}-b_{I_1}).\]
Thus, we have shown that
\[\frac{t(a)+t(b)}{2}-t\left(\frac{a+b}{2}\right)=\frac{1}{4}\left(\|a_{I_0}g-b_{I_0}g\|^2+ K(a_{I_1}-b_{I_1})\cdot (a_{I_1}-b_{I_1})\right).\]
Therefore, if $a$ and $b$ are two different minimum points of $t(\cdot)$ in a convex region, it follows from properties of $K$ that the second term is non-negative, and thus necessarily $a_{I_0}g=b_{I_0}g$.
\end{proof}
%------------R.K. end
\\\\
Recall an important bound from Lemma \ref{lem:toke}: for every $i\in \{1,\ldots,k\}$,
%------------------
\begin{equation}\label{toke}
{v^n_i\over \sigma_{i,n}}\leq a_n+{b\over \sigma_{0,n}},\quad a_n=2\sqrt{\pi}\|f_n\|_{\infty},\quad b=2\sqrt{2},\quad \sigma_{0,n}:=\max \{\sigma_{i,n}:v^n_i>0\} .
\end{equation}
In what follows, we shall assume that $\sup_n \|f_n\|_{\infty}<\infty$ and therefore, the constant $a_n$ in (\ref{toke}) can be chosen independently of $n$, so we shall use $a<\infty$ instead of $a_n$.
\\\\
%------------
The following auxiliary lemma will be needed in the proof of the main proposition of this section.
\begin{lemma}\label{lem:lisa1} Let $a^n\in S_k$ be an arbitrary sequence of weights such that the sequence
$\|f_n-a^ng^n\|$ is bounded above. Then $\sum_{i\in I_3}a_i^n\to
0$.\end{lemma}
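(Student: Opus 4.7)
The plan is to argue by contradiction: assume that $\sum_{i\in I_3} a_i^n \not\to 0$, extract a subsequence along which a fixed component dominates, and show that the corresponding $\|a^ng^n\|$ must blow up, contradicting the boundedness of $\|f_n - a^n g^n\|$.

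First I would pass to a subsequence. If $\sum_{i\in I_3} a_i^n \not\to 0$, then there is $\varepsilon>0$ and a subsequence (still indexed by $n$) along which $\sum_{i\in I_3} a_i^n \ge \varepsilon$. Since $I_3$ is finite, by pigeonhole we can pass to a further subsequence on which some fixed index $i^*\in I_3$ satisfies $a_{i^*}^n \ge \varepsilon/k$ for all $n$. By definition of $I_3$, $\sigma_{i^*,n}\to 0$, so
\[
\|g_{i^*}^n\|^2 \;=\; \frac{1}{2\sqrt{\pi}\,\sigma_{i^*,n}} \;\longrightarrow\; \infty.
\]

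Next I would exploit non-negativity of the Gram entries. Since $g_j^n\ge 0$ as densities, $k_{ij}^n=\langle g_i^n,g_j^n\rangle\ge 0$ for all $i,j$, so
\[
\|a^n g^n\|^2 \;=\; \sum_{i,j} a_i^n a_j^n k_{ij}^n \;\ge\; (a_{i^*}^n)^2\,\|g_{i^*}^n\|^2 \;\ge\; \Bigl(\frac{\varepsilon}{k}\Bigr)^{\!2}\cdot\frac{1}{2\sqrt{\pi}\,\sigma_{i^*,n}} \;\longrightarrow\; \infty.
\]
On the other hand, the hypothesis gives $\|f_n - a^n g^n\|\le M$ for some $M$, while $f_n\to f$ in $L_2$ implies $\sup_n\|f_n\|<\infty$. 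The triangle inequality yields
\[
\|a^n g^n\| \;\le\; \|f_n\| + \|f_n - a^n g^n\| \;\le\; \sup_n\|f_n\| + M \;<\; \infty,
\]
which contradicts the divergence above. Hence $\sum_{i\in I_3} a_i^n\to 0$, as desired.

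There is no real obstacle: the only care needed is to pick, after extracting one subsequence on which the sum over $I_3$ stays bounded away from zero, a further subsequence on which the weight on one particular index in $I_3$ stays bounded away from zero — which is legitimate because $|I_3|$ is finite. The non-negativity of $k_{ij}^n$ (thanks to using Gaussian densities rather than signed functions) is what lets us throw away the cross terms and keep just the diagonal contribution $(a_{i^*}^n)^2 \|g_{i^*}^n\|^2$.
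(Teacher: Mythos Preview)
Your proof is correct and follows essentially the same route as the paper: bound $\|a^ng^n\|$ via the triangle inequality, use nonnegativity of the densities to get $\|a^ng^n\|\ge a_i^n\|g_i^n\|$, and invoke $\|g_i^n\|\to\infty$ for $i\in I_3$. The only difference is cosmetic---the paper argues directly that $a_i^n\to 0$ for each $i\in I_3$, avoiding the contradiction and subsequence extraction.
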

\begin{proof}
By assumption, $\|f_n-a^ng^n\|$ is bounded above. The reverse triangular
inequality $\|f_n-a^ng^n\|\geq \|a^ng^n\|-\|f_n\|$ implies the
boundedness of $\|a^ng^n\|$. Now, because of nonnegativity of all terms, for every $n$ and $i$,
$$\|a^ng^n\|\geq a^n_i\|g_i^n\|.$$
Since $\forall i\in I_3$, $\|g_i^n\|\rightarrow \infty,$
it follows that $a^n_i\rightarrow 0$.
\end{proof}

%--------------------------------------------------------------------------------
\begin{corollary}\label{lem:lisa}  \textcolor{blue}{For any choice of $v^n$}, $\sum_{i\in I_3}v_i^n\to 0$.
\end{corollary}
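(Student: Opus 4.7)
The plan is to apply Lemma \ref{lem:lisa1} with $a^n = v^n$, so the only thing to verify is that the sequence $\|f_n - v^n g^n\|$ is bounded above. This should follow directly from the optimality of $v^n$ combined with our standing assumption that $1 \in I_0$, which gives a convenient comparator vector inside the simplex.

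Concretely, I would test the minimization against the extreme point $e_1 = (1, 0, \ldots, 0) \in S_k$. Since $v^n$ is a minimizer of $w \mapsto \|f_n - w g^n\|$ over $S_k$, we have
\[
\|f_n - v^n g^n\| \;\leq\; \|f_n - e_1 g^n\| \;=\; \|f_n - g_1^n\| \;\leq\; \|f_n\| + \|g_1^n\|.
\]
Because $f_n \to f$ in $L_2$ by hypothesis, the sequence $\|f_n\|$ is bounded; and because $1 \in I_0$, we have $g_1^n \to g_1$ in $L_2$ (this is the standing setting of the section), so $\|g_1^n\|$ is also bounded. Hence $\|f_n - v^n g^n\|$ is bounded above uniformly in $n$ and in the choice of minimizer $v^n$.

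Lemma \ref{lem:lisa1} then applies to $a^n := v^n$ and yields $\sum_{i \in I_3} v_i^n \to 0$, which is the conclusion. There is no real obstacle here: the assumption $r_0 > 0$ is precisely what makes the comparator $e_1$ available, and the key input is already encapsulated in Lemma \ref{lem:lisa1}, whose proof exploited the blow-up $\|g_i^n\| \to \infty$ for $i \in I_3$.
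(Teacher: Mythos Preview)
Your proposal is correct and matches the paper's own proof almost verbatim: both compare $v^n$ against the extreme point $e_1$ to bound $\|f_n - v^n g^n\| \leq \|f_n - g_1^n\|$, observe this is bounded because $f_n\to f$ and $g_1^n\to g_1$, and then invoke Lemma~\ref{lem:lisa1}. The only cosmetic difference is that the paper notes $\|f_n - g_1^n\|\to\|f-g_1\|$ directly, whereas you split via the triangle inequality; either way the boundedness follows.
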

\begin{proof} By assumption,
$g_1^n\to g_1$. Then $\|f_n-v^ng^n\|\leq  \|f_n-g_1^n\|\to
\|f-g_1\|$, so that $\|f_n-v^ng^n\|$ is bounded above. Thus, the assumptions of Lemma \ref{lem:lisa1} with $a^n=v^n$ are satisfied.\end{proof}
%
%---------------
%-----------------
\subsection{Convergence of the function $v^n g^n$}
Let $v^n$ be any vector of weights minimizing (\ref{veiks}). The main result of the present section is the following proposition.
\begin{proposition}\label{lemma} Let $f_n\to f$ and $\sup_n
\|f_n\|_{\infty}<\infty$. Let $y_n\to y$ be a convergent sequence such that
$y\not\in \{\mu_i:i\in I_3\}$. Then $v^ng^n(y_n)\to
u_{I_0}g(y)$ and
\begin{equation}\label{vaja}
v^{n}_{I_0}g^{n}\to {u}_{I_0}g.
\end{equation}
\end{proposition}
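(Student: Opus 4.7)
The plan is a compactness-plus-uniqueness argument: since $\{v^n\}\subset S_k$ is sequentially compact, it suffices to show that every subsequential limit $v^*$ of $v^n$ produces the prescribed function $u_{I_0}g$. Fix an arbitrary subsequence; refining it if necessary, assume $v^n\to v^*$ in $\mathbb{R}^k$. By Corollary \ref{lem:lisa}, $v^*_i=0$ for $i\in I_3$, so the truncation of $v^*$ to $I_0\cup I_1\cup I_2$ lies in $S_{r_0+r_1+r_2}$ and $t(v^*)$ is defined. Using that $k^n_{ij}\geq 0$ and $\|v^n g^n\|$ is bounded (by $\|f_n\|+\|f_n-g_1^n\|$, hence $\|v^n_{I_3}g^n_{I_3}\|^2\leq \|v^ng^n\|^2$ is bounded), we may further refine so that $\|v^n_{I_3}g^n_{I_3}\|^2\to T\in[0,\infty)$.

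The core computation is
\[
\lim_n \|f_n - v^n g^n\|^2 \;=\; \|f - v^*_{I_0}g\|^2 + \sum_{i,j\in I_1} v^*_iv^*_j k_{ij} + T \;=\; t(v^*)+T.
\]
I would expand $\|f_n - v^n g^n\|^2 = \|f_n\|^2 - 2\langle f_n,v^n g^n\rangle + \|v^n g^n\|^2$ and track each block of indices: (i) the $I_0$ pieces converge by strong $L_2$ convergence $f_n\to f$ and $g_i^n\to g_i$; (ii) $\|g_i^n\|\to 0$ for $i\in I_2$ kills every term involving $I_2$; (iii) for $i\in I_1$, the weak convergence $g_i^n\rightharpoonup 0$ gives $\langle f_n,g_i^n\rangle\to 0$, and $k^n_{ij}\to 0$ when $j\in I_0$, leaving only $\sum_{i,j\in I_1}v^*_iv^*_j k_{ij}$; (iv) every cross term involving $I_3$ with another class equals $v_i^n v_j^n k^n_{ij}$ with $v_i^n\to 0$ (Corollary \ref{lem:lisa}) and $k^n_{ij}$ bounded on the other class, while $\langle f_n,v_i^n g_i^n\rangle\leq \|f_n\|_\infty v_i^n\to 0$; the only surviving $I_3$-contribution is $\|v^n_{I_3}g^n_{I_3}\|^2\to T$.

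Apply the same calculation to the comparison vector $\tilde u \in S_k$ defined by $\tilde u_i = u_i$ for $i\in I_0\cup I_1\cup I_2$ and $\tilde u_i = 0$ for $i\in I_3$: since $\tilde u_{I_3}=0$, there is no $T$-term and $\lim_n \|f_n-\tilde u g^n\|^2=t(u)=t^*$. The minimality of $v^n$ in (\ref{veiks}) gives $\|f_n - v^n g^n\|^2\leq \|f_n-\tilde u g^n\|^2$, so passing to the limit yields $t(v^*)+T\leq t^*$. Since the truncation of $v^*$ to $I_0\cup I_1\cup I_2$ lies in $S_{r_0+r_1+r_2}$ and $u$ minimizes $t$ on that simplex, $t(v^*)\geq t^*$; combined with $T\geq 0$ this forces $t(v^*)=t^*$ and $T=0$. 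Lemma \ref{claim} then identifies $v^*_{I_0}g=u_{I_0}g$ in $L_2$. Since every subsequence yields the same limiting function $u_{I_0}g$, the full sequence $v^n_{I_0}g^n$ converges to $u_{I_0}g$ in $L_2$.

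For the pointwise claim at $y_n\to y$ with $y\notin\{\mu_i:i\in I_3\}$, split $v^ng^n(y_n)=\sum_{j=0}^{3}(v^n_{I_j}g^n_{I_j})(y_n)$. Along the working subsequence, the $I_0$ part converges to $\sum_{i\in I_0}v^*_ig_i(y)=u_{I_0}g(y)$ by continuity, while the $I_1$ and $I_2$ contributions vanish because $g_i^n(y_n)\to 0$ (diverging means for $I_1$; the bound $\|g_i^n\|_\infty=1/(\sqrt{2\pi}\sigma_{i,n})\to 0$ for $I_2$). The exclusion $y\neq \mu_i$ for $i\in I_3$ keeps $|y_n-\mu_{i,n}|\geq \delta>0$ eventually, so the exponential factor $\exp(-(y_n-\mu_{i,n})^2/(2\sigma_{i,n}^2))$ overwhelms the $1/\sigma_{i,n}$ prefactor and the $I_3$-part vanishes as well; uniqueness of the subsequential limit upgrades convergence to the full sequence. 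The main technical obstacle sits in the second paragraph: establishing that all $I_3$ cross-terms collapse cleanly and that the residual mass $\|v^n_{I_3}g^n_{I_3}\|^2$ is pinned to $0$ only through the delicate trade-off encoded by the $\tilde u$-comparison.
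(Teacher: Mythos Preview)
Your argument is correct and follows the same architecture as the paper's proof: pass to a subsequence $v^{n'}\to v^*$, compute the limit of $\|f_{n'}-v^{n'}g^{n'}\|^2$, compare with the test vector $\tilde u$ to force $t(v^*)=t^*$, and invoke Lemma~\ref{claim} to identify $v^*_{I_0}g=u_{I_0}g$.

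The one genuine difference is in how the $I_3$ block is disposed of. The paper eliminates it \emph{before} the comparison step: using the bound $v_i^n/\sigma_{i,n}\le a+b/\sigma_{0,n}$ from Lemma~\ref{lem:toke} together with $\liminf_n\sigma_{0,n}>0$, it shows directly that $v_i^n\|g_i^n\|=(2\sqrt{\pi})^{-1/2}(v_i^n/\sigma_{i,n})\sqrt{\sigma_{i,n}}\to 0$, so $z_3^n\to 0$ in $L_2$ and no residual term appears in the limiting quadratic. You instead carry the quantity $T=\lim\|v^{n'}_{I_3}g^{n'}\|^2$ through the computation and then squeeze it to zero via $t(v^*)+T\le t^*\le t(v^*)$. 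This is a legitimate and slightly softer route: it does not rely on Lemma~\ref{lem:toke} at all, only on Corollary~\ref{lem:lisa} (for the cross terms) and the nonnegativity $k^n_{ij}\ge 0$ (for the boundedness of $T$). Likewise, in the pointwise step you bypass the $v_i^n/\sigma_{i,n}$ bound by noting that for $i\in I_3$ with $|y_n-\mu_{i,n}|\ge\delta$, the factor $\sigma_{i,n}^{-1}\exp(-\delta^2/(2\sigma_{i,n}^2))\to 0$ already kills $g_i^n(y_n)$ regardless of the weight. The paper's route is more direct once Lemma~\ref{lem:toke} is available; yours shows that the proposition can be obtained without that lemma, which is a mild but genuine simplification.
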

\begin{proof}
Denote
$$z_j^n=v^n_{I_{j}}g^n,\quad j \in \{0,1,2,3\}.$$
Clearly, $f_n,z_0^n$ and $z_1^n$ are bounded in $L_2$.
Using the notation $\langle \cdot,\cdot\rangle$ for the inner product in $L_2$, we can write
\begin{align*}\|f_n-v^ng^n\|^2&=\|f_n-z_0^n\|^2+\|z_1^n\|^2+\|z_2^n+z_3^n\|^2\\
&-2\langle f_n-z_0^n,z_1^n\rangle-2\langle f_n-z_0^n-z_1^n,z_2^n+z_3^n\rangle.
\end{align*}
Since $g_i^n\rightarrow 0$ and $|v_i|\leq 1$ $\forall i \in I_2$, we have $z_2^n\rightarrow 0$.  Next, we will show that $z_3^n\rightarrow 0$, then it is clear that the third and fifth terms above converge to 0 when $n \to \infty$.

%First, we will show that $v^n_i\to 0,\ i\in I_3$.
According to Corollary \ref{lem:lisa}, $v^n_i\to 0$ for every $i\in I_3$.
%--------
Denote $J:= I_0\cup I_1\cup I_2$ and
$\sigma_{0,n}:=\max_i \{\sigma_{i,n}: v^n_i>0\}.$ By Corollary
\ref{lem:lisa}, for every $n$ big enough, there
exists $i(n)\in J$ such that $\sigma_{0,n}=\sigma_{i(n),n}$. In
other words, $\sigma_{0,n}\geq \min_{i\in J}\sigma_{i,n}$. This, in turn, implies $\lim\inf_n \sigma_{0,n}\geq \lim_n  \min_{i\in
J}\sigma_{i,n}=\min_{i\in J}\sigma_i>0$. Since
$\sup_n\|f_n\|_{\infty}<\infty$, by (\ref{toke}) there exist constants $a$ and $b$ such that $v^n_i/\sigma_{i,n}\leq a+{b /\sigma_{0,n}}$.
 For every $i$,
\begin{equation}\label{l2norm}
\|g^n_i\|^2={1\over 2\sqrt{\pi}}\cdot{1\over
\sigma_{i,n}},\end{equation} so that when $\sigma_{i,n}\to 0$, then
 $$v_i^n\|g^n_i\|=(2\sqrt{\pi})^{-{1\over 2}}\cdot {v_i^n\over \sigma_{i,n}}\sqrt{\sigma_{i,n}}\to 0.$$
Thus $\|z_3^n\|\leq \sum_{i\in J^c}v^n_i\|g_i^n\|\to 0$, implying $z_3^n\to 0$.

Now we are ready to study the existence and properties of the limit of $v^ng^n$.
By the compactness of $S_k$, there exists a converging subsequence $v^{n'}\to w'$. Recall that $g^n_i\to g_i$ for every $i\in I_0$, thus $f_{n'}-z_0^{n'}\rightarrow f-w_{I_0}'g$.  Since $g_i^n\rightharpoonup 0\ \forall i\in I_1$, we have $z_1^n\rightharpoonup 0$, and therefore, $\langle f_{n'}-z_0^{n'},z_1^{n'}\rangle\to 0$. Finally,  $\|z_1^{n'}\|^2=K^{n'}v^{n'}_{I_1}\cdot v^{n'}_{I_1}\rightarrow Kw'_{I_1}\cdot w'_{I_1} $. Hence,
$$\|f_{n'}-v^{n'}g^{n'}\|^2\to t(w'_{J}).$$
On the other hand, if we define $$\tilde{u}_i=\begin{cases}
    u_i,& i\in J,\\
    0,&i\in I_3,
\end{cases}$$
then, by similar argument, we have
$$\|f_n-\tilde{u}g^n\|^2\rightarrow t(u)=t^*.$$
According to the definition of $v^n$, the inequality $\|f_{n}-\tilde{u}g^{n}\|^2\geq \|f_{n}-v^ng^{n}\|^2$ holds for every $n$, therefore $t(u)\geq t(w'_{J})$. According to the definition of $u$, this implies that $t(u)=t(w'_{J})$, and because of the uniqueness of $u_{I_0}g$, we have $w'_{I_0}g=u_{I_0}g$. Thus, for this subsequence, the convergence \eqref{vaja} holds. Since from every subsequence of the original sequence we can extract a subsequence which converges to the same limit, we have established  (\ref{vaja}).

Since for every $i\in I_0$, $\mu_{i,n}\to \mu_i$ and $\sigma_{i,n}\to \sigma_i>0$, for every convergent sequence $y_n\to y$, also $v^{n}_{I_0}g^{n}(y_n)\to {u}_{I_0}g(y)$. Since $y\in \mathbb{R}$, but for every $i\in I_1$, $|\mu_{i,n}|\to \infty$ and $\sigma_{i,n}\to \sigma_i \in (0,\infty)$, it follows that $g^n_i(y_n)\to 0$, thus $v^{n}_{I_1}g^{n}(y_n)\to 0$.  For every $i\in I_2$, $g_i^n(y_n)\to 0$. Finally, if $y\not\in \{\mu_i: i\in I_3\}$, then for every $i\in I_3$, $\exp[-{(y_n-\mu_{i,n})^2 / \sigma^2_{i,n}}]\to 0$. Since ${v_n / \sigma_{i,n}}\leq a+{b /\sigma_{0,n}}$, and $\sigma_{0,n}$ is bounded away from 0, we obtain $v^n_{I_3}g^n(y_n)\to 0$. Thus, $v^ng^n(y_n) \to {u}_{I_0}g(y)$.
\end{proof}
\begin{corollary}\label{lihtne} Let $f_n\to f$ and $\sup_n
\|f_n\|_{\infty}<\infty$. When $g_i^n\to g_i$ for every $i \in \{1,\ldots,k\}$, then $v^ng^n\to vg$, where
$v=\arg\min_{w\in S_k}\|f-wg\|$.\end{corollary}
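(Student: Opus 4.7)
The plan is to obtain this as an immediate specialization of Proposition \ref{lemma}. Under the hypothesis that every $g_i^n\to g_i$ (i.e.\ every $\sigma_{i,n}\to \sigma_i\in(0,\infty)$ and every $\mu_{i,n}\to \mu_i\in\mathbb{R}$), the partition of indices introduced before Proposition \ref{lemma} degenerates to
$$I_0=\{1,\ldots,k\},\qquad I_1=I_2=I_3=\emptyset.$$
In particular the additional assumption that $\lim_n(\mu_{i,n}-\mu_{j,n})$ exists is automatic, so no subsequence extraction is needed.

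With this partition, the auxiliary functional $t$ introduced before Lemma \ref{claim} reduces to $t(w)=\|f-wg\|^2$ on the simplex $S_k$, its minimizer is by definition $u=v=\arg\min_{w\in S_k}\|f-wg\|$, and the density $u_{I_0}g$ equals $vg$ (which is the unique minimum-density by Lemma \ref{sisse}, independently of any non-uniqueness of $v$ itself).

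Now I would simply invoke conclusion \eqref{vaja} of Proposition \ref{lemma}, which gives
$$v^n_{I_0}g^n\longrightarrow u_{I_0}g\quad\text{in }L_2.$$
Since $I_0=\{1,\ldots,k\}$ we have $v^n_{I_0}g^n=v^n g^n$, and since $u_{I_0}g=vg$, this is exactly $v^n g^n\to vg$ in $L_2$, which is the claim. No real obstacle arises: the proposition does all the work, and the only thing to verify is that the hypotheses of Proposition \ref{lemma} (namely $f_n\to f$, $\sup_n\|f_n\|_\infty<\infty$, and existence of the componentwise limits of $g_i^n$) are already assumed here, and that $r_0=k\geq 1$ so the convention $1\in I_0$ is trivially met.
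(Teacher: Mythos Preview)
Your proof is correct and follows essentially the same route as the paper's own proof: specialize to $I_0=\{1,\ldots,k\}$, observe that $t(w)=\|f-wg\|^2$ so $u=v$, and invoke \eqref{vaja} from Proposition~\ref{lemma}. The paper's version is terser, but the argument is identical.
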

\begin{proof} When $g_i^n\to g_i$ for every $i$, then
$I_0=\{1,\ldots,k\}$, $t(w)=\|f-wg\|^2$ and $u=v$. The convergence $v^ng^n\to vg$ follows from (\ref{vaja}).
\end{proof}
%
%
%-------------
\section{Uniform convergence of the criterion function} \label{sec:uniform}
%---------------------
\paragraph{Extending the pseudo-likelihood function.}
%--------------------------------------------
Recall the log-pseudo-likelihood function $\ell_n(\t)$. We enlarge $\Theta_o$, allowing some of the variances to be zero or infinite, and some of the means to be infinite. Thus, we define $\Theta:=\big([-\infty,\infty]\times [0,\infty]\big)^k$. We now extend the pseudo-likelihood to $\Theta$.
For every $\t\in \Theta$, let
\begin{align*}
I_0(\t)&:=\{i:\sigma_i\in (0,\infty),\, |\mu_i|<\infty\},\quad I_1(\t):=\{i:\sigma_i\in (0,\infty),\, |\mu_i|=\infty\},\\
I_2(\t)&:=\{i:\sigma_i=\infty\},\quad I_3(\t):=\{i:\sigma_i=0\}.\end{align*}
When $I_1(\t)\ne \emptyset$, we need the symmetric,  nonnegatively definite matrix $K=(k_{i,j})_{i,j\in I_1(\t)}$ defined in Section \ref{sec:app}, where $k_{i,i}={1\over 2\sigma_i\sqrt{\pi}}$.
For the elements of $\Theta$ with $|I_0(\t)|=r_0>0$, we extend the function $\ell(\t)$ as follows ($g_i(\cdot):=g(\t_i,\cdot), i\in I_0$). Recall that for any $w\in S_{r_0+r_1+r_2}$, $t(w):=\|f-w_{I_0}g\|^2+\sum_{i,j\in I_1}w_iw_jk_{ij}$, and $u:=\arg\min_{w\in S_{r_0+r_1+r_2}}t(w)$.
Let
%------------------
\begin{equation*}
\ell(\t,K):=E\ln \big({u}_{I_0}(\t)g(\t,Y_1)\big).
\end{equation*}
By Lemma \ref{claim}, the definition of $\ell(\t,K)$ is correct. Observe that when $I_1(\t)=\emptyset$, then $\ell(\t,K)$ is independent of $K$, and when $\sum_{i\in I_0}u_i=0$, then $\ell(\t,K)=-\infty$.
When $\theta\in \Theta_o$, then $I_0(\t)=\{1,\ldots,k\}$,
$t(w)=\|f-wg\|^2$, $I_2(\t)=\emptyset$ and $u(\t)=v(\t)$. Hence, $\ell(\t,K)$ extends $\ell(\theta)$. When $\sum_{i\in I_0}u_i\not=0$, we define
 $|{u}|=\sum_{i\in I_0}{u}_i$ and $\tilde{f}= {u}_{I_0}g /|{u}|$. So $\tilde{f}$ is a proper probability density function and if it is different from $f$, then by Gibb's inequality, $E\ln(\tilde{f}(Y_1))<\ell(\t^*)$. Now, whenever $\t\in \Theta$ is such that $\t\ne \t^*$ (in the sense of equivalence classes), then for every $K$ it holds that
\begin{equation}\label{pisi}
\ell(\t,K)= E\ln(\tilde{f}(Y_1))+\ln|{u}|<E\ln(\tilde{f}(Y_1))<\ell(\t^*).\end{equation}
%-------------------------
In the following proposition, $\t^n\to \t$ denotes the componentwise convergence $\mu_{i,n}\to \mu_i$ and $\sigma_{i,n}\to \sigma_i$, where we allow some limits $\mu_i$ to be infinite and $\sigma_i$ to be $0$ or $\infty$. Hence, $\t\in \Theta$. Moreover, we assume that $K_n = (\langle g^n_i, g^n_j \rangle)_{i,j \in I_1(\theta)}$ converges entrywise to the matrix $K$, denoted by $K_n \to K$. This convergence is required for Proposition \ref{lemma}.
%-----------------
\begin{proposition}\label{prop1} Assume that $\sup_n\|\f\|_{\infty}<\infty$ almost surely. Let $\t^n\in \Theta_o$ and $\t^n\to \t\in \Theta$, $|I_0(\t)|=r_0>0$ and $K_n\to K$. Then
 $\ell_n(\t^n)\stackrel{a.s.}{\to} \ell(\t,K)$ and $\ell(\t^n)\to \ell(\t,K)$. Moreover, the set of $\omega$'s where the almost sure convergence holds is independent of the sequence $\{\t^n\}$.
\end{proposition}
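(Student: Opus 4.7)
The plan is to invoke the modified strong law in Corollary \ref{cory} with $h^\omega_n(y) := \ell_n(y,\theta^n) = \ln\bigl(v^n(\theta^n) g(\theta^n, y)\bigr)$ and target $h(y) := \ln\bigl(u_{I_0}(\theta) g(\theta, y)\bigr)$, interpreted as $-\infty$ when $|u| := \sum_{i \in I_0} u_i = 0$ (in which case the fallback (\ref{claim2}) of Lemma \ref{dom} is used instead). Fix the full-probability event $\Omega_0$ on which $\hat{f}_n \to f$ in $L_2$ and $\sup_n \|\hat{f}_n\|_\infty < \infty$. Proposition \ref{lemma} applied with $f_n := \hat{f}_n$ delivers the pointwise convergence hypothesis of Corollary \ref{cory}: for every $\omega \in \Omega_0$ and every $y_n \to y$ with $y \notin \{\mu_i : i \in I_3(\theta)\}$ one has $v^n(\theta^n) g(\theta^n, y_n) \to u_{I_0}(\theta) g(\theta, y)$, and the exceptional set is finite, hence $P$-null.

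The substantive step is the construction of a continuous dominating function $H^\omega$. An upper bound on $h^\omega_n$ is immediate: Lemma \ref{lem:toke} combined with $\sup_n \|\hat{f}_n\|_\infty < \infty$ and the estimate $\liminf_n \sigma_{0,n} \ge \min_{i \in J} \sigma_i > 0$ established inside the proof of Proposition \ref{lemma} give $v^n_i(\theta^n)/\sigma_{i,n} \le \tilde{C}$ eventually, hence $v^n(\theta^n) g(\theta^n, y) \le k \tilde{C}/\sqrt{2\pi}$ and $h^\omega_n$ is uniformly bounded above by a constant. When $|u| = 0$ this alone suffices to invoke (\ref{claim2}). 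In the remaining case $|u| > 0$ I would select a distinct parameter $\theta' = (\mu', \sigma')$ occurring in $\{\theta_i : i \in I_0\}$ whose aggregate limit weight $U_{\theta'} := \sum_{i \in I_0,\, \theta_i = \theta'} u_i$ is strictly positive, set $I' := \{i \in I_0 : \theta_i = \theta'\}$, and upgrade the $L_2$ convergence $v^n_{I_0} g^n \to u_{I_0} g$ of Proposition \ref{lemma} to the scalar statement $\sum_{i \in I'} v^n_i \to U_{\theta'}$. The upgrade uses compactness of the weight simplex and the linear independence of distinct Gaussians in $L_2$: along any convergent subsequence $v^{n'}_{I_0} \to v^*_{I_0}$, the $L_2$ limit $\sum_{i \in I_0} v^*_i g_i$ must equal $u_{I_0} g$, and matching coefficients then forces $\sum_{i \in I'} v^*_i = U_{\theta'}$ for every distinct $\theta'$, so the full sequence converges. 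Consequently $\sum_{i \in I'} v^n_i \ge U_{\theta'}/2$ for large $n$. Since $\sigma_{i,n} \to \sigma' \in (0,\infty)$ and $\mu_{i,n} \to \mu' \in \mathbb{R}$ for every $i \in I'$, there exist constants $c_1, c_2, c_3 > 0$ depending only on $\theta'$ such that $g^n_i(y) \ge c_1 e^{-c_2 y^2 - c_3 |y|}$ for large $n$ and all $y$, yielding
\[
v^n(\theta^n) g(\theta^n, y) \;\ge\; \sum_{i \in I'} v^n_i\, g^n_i(y) \;\ge\; \tfrac{U_{\theta'}}{2}\, c_1\, e^{-c_2 y^2 - c_3 |y|}.
\]
After absorbing the finitely many small-$n$ contributions into a slightly larger quadratic envelope, this produces $|h^\omega_n(y)| \le H^\omega(y) := C_1 + C_2 y^2 + C_3 |y|$ valid for all $n$; the function $H^\omega$ is continuous and satisfies $\int H^\omega\, dP < \infty$ because $f$ has Gaussian tails.

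Corollary \ref{cory} then delivers $\ell_n(\theta^n) \stackrel{a.s.}{\to} \ell(\theta, K)$ on $\Omega_0 \cap \Omega_{\mathrm{SLLN}}$, where $\Omega_{\mathrm{SLLN}}$ is the standard SLLN event for the three fixed functions $1$, $|y|$ and $y^2$ (ensuring $\int H^\omega\, dP_n^\omega \to \int H^\omega\, dP$); both events are manifestly independent of the sequence $\{\theta^n\}$. The deterministic convergence $\ell(\theta^n) \to \ell(\theta, K)$ is obtained by rerunning the same argument with the random $\hat{f}_n$ replaced by $f$: Proposition \ref{lemma} still supplies pointwise convergence of $v(\theta^n) g(\theta^n, y)$ to $u_{I_0}(\theta) g(\theta, y)$, the same $H$ dominates, and ordinary dominated convergence replaces Corollary \ref{cory}. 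The main obstacle throughout is the uniform-in-$n$ Gaussian-type lower bound on $v^n(\theta^n) g(\theta^n, y)$; resolving it requires translating the $L_2$ statement $v^n_{I_0} g^n \to u_{I_0} g$ into pointwise control, which is achieved by grouping the $I_0$-indices by their shared limits $\theta'$ and extracting, via compactness and Gaussian identifiability, the aggregate-weight convergence $\sum_{i \in I'} v^n_i \to U_{\theta'}$.
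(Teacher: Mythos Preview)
Your proof is correct and follows the paper's overall strategy: apply Corollary~\ref{cory} with $h^\omega_n=\ln(v^ng^n)$ and $h=\ln(u_{I_0}g)$, supply the pointwise hypothesis via Proposition~\ref{lemma}, bound $h^\omega_n$ above by a constant using (\ref{toke}) together with $\liminf_n\sigma_{0,n}>0$, and construct a quadratic dominating function $H$. The only substantive difference is in the lower bound on $v^ng^n(y)$. You fix a distinct limit $\theta'$ with $U_{\theta'}>0$, set $I'=\{i\in I_0:\theta_i=\theta'\}$, and establish $\sum_{i\in I'}v^n_i\to U_{\theta'}$ by a subsequence-plus-identifiability argument. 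The paper takes a shorter route: from $\|v^n_{I_0}g^n\|\to\|u_{I_0}g\|>0$ it infers directly that $\liminf_n\sum_{i\in I_0}v^n_i>0$, so by pigeonhole some (possibly $n$-dependent) index $j(n)\in I_0$ has $v^n_{j(n)}\ge\alpha/k$; then
\[
v^ng^n(y)\ \ge\ v^n_{j(n)}g^n_{j(n)}(y)\ \ge\ \frac{1}{\sqrt{2\pi}}\,\frac{\alpha}{k\,\max_{i\in I_0}\sigma_{i,n}}\exp\Bigl[-\frac{\max_{i\in I_0}(y-\mu_{i,n})^2}{\min_{i\in I_0}\sigma_{i,n}^2}\Bigr],
\]
which already gives a quadratic envelope since all $\mu_i$ are finite and all $\sigma_i$ positive for $i\in I_0$. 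Your route yields a slightly tighter envelope but at the cost of the extra compactness-and-identifiability step; the paper's pigeonhole observation is more economical and avoids grouping $I_0$ by distinct limits altogether.
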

%---------
\begin{proof} Recall that $P_n$ is the empirical measure based on $Y_1,\ldots,Y_n$, and $P$ is the probability measure with density $f$. Let
\begin{align*}
\Omega_o:=&\{\omega: \f\to f\}\cap\{P_n\Rightarrow P\}\cap \{\sup_n\|\f\|_{\infty}<\infty\}\cap
        \{\int y^2 P_n(dy)\to \int y^2 P(dy)\}.\end{align*}
Take $\omega\in \Omega_o$ and denote $v^n=v^n(\t^n)$ (recall (\ref{ddd})), let $P_n^{\omega}$ be the corresponding empirical measures. Observe that $v^n$ depends on $\omega$. Take $f_n=\f$, then $f_n\to f$.

{\textbf{The upper bound of $\ln(v^ng^n(y))$}}. Consider a sequence $y_n\to y$, where $y\not \in \{\mu_i: i\in I_3(\t)\}$. By Proposition \ref{lemma},  $v^ng^n(y_n)\to {u}_{I_0}g(y)$. Apply Corollary \ref{cory} with
$$h^{\omega}_n(\cdot):=\ln \big(v^ng^n(\cdot)\big),\quad h(\cdot):=\ln \big({u}_{I_0}g(\cdot)\big).$$
It may happen that $\sum_{i\in I_0}{u}_i=0$, in which case, by (\ref{vaja}), we have $v^n_{I_0}g^n\to 0$. If this is the case, then (\ref{claim2}) of Lemma \ref{dom} establishes that $\ell_n(\t^n)\to -\infty$ (the function $H$ is constant, see the upper bound in (\ref{Hbound}) below). We now consider the case where $\|{u}_{I_0}g\|>0$. Let us show that there exists a continuous function $H$ such that $|h^{\omega}_n(y)|\leq H(y)$ and $\int H dP^{\omega}_n\to \int HdP$. By (\ref{vaja}), it holds that
$\|v^n_{I_0}g^n\|\to \|{u}_{I_0}g\|>0$, which implies that $\lim\inf_n \sum_{i\in I_0}v^n_i>0$.
%Indeed, if not, then $\lim\inf_n \|v^n_{I_0}g^n\|=0$, because
%$$\|v^n_{I_0}g^n\|\leq \sum_{i\in {I_0}}v^n_i\|g^n_i\|=(2\sqrt{\pi})^{-{1\over 2}}\sum_{i\in I_0}%{v^n_i\over \sqrt{\sigma_{i,n}}}\leq {\sum_{i\in I_0}v^n_i\over \min_{i\in I_0}\sqrt{\sigma_{i,n}}}$$
%and $\min_{i\in I_0}\sqrt{\sigma_{i,n}}\to \min_{i\in I_0}\sqrt{\sigma_i}>0$.
Thus, there exists $\alpha>0$ (depending on $\t$ and $K$, but independent of the choice of $v^n$) such that $\sum_{i\in I_0} v^n_i>\alpha$ eventually. This means that for every $n$ large enough, there exists $j\in I_0$ such that $v^n_j\geq {\alpha/ k}$, and therefore ${v^n_j/ \sigma_{j,n}}\geq {\alpha \over k\max_{i\in I_0}\sigma_{i,n}}$. Recall from (\ref{toke}) that $\sigma_{0,n}=\max\{\sigma_{i,n}:v^n_i>0\}$ and for $n$ large enough, $v_i^n/\sigma_{i,n}\leq a+b /\sigma_{0,n}$ (because for large $n$, $\|\f\|_{\infty}<\|f\|_{\infty}+1$). Thus, we have
\begin{align}\label{Hbound}
k\left(a+{b\over \sigma_{0,n}}\right)\geq  v^ng^n(y)\geq v^n_{I_0}g^n(y)\geq {1\over \sqrt{2\pi}} {\alpha\over k \max_{i\in I_0}\sigma_{i,n}}\exp\left[-{\max_{i\in I_0}(y-\mu_{i,n})^2\over \min_{i\in I_0}\sigma^2_{i,n}}\right].
\end{align}
Since $\sum_{i\in I_0}v^n_i>\alpha$ eventually, it holds that
$$\lim\inf_n \sigma_{0,n}\geq \lim_n  \min_{i\in I_0}\sigma_{i,n}=\min_{i\in I_0}\sigma_i>0.$$
Therefore, $\ln \big(v^n g^n(y)\big)$ is bounded above by a constant: $\ln(k(a+{b / \sigma_{0,n}}))\leq N_1$, where $N_1$ depends on $\min_{i\in I_0}\sigma_i$ and hence on $\t$.
%---

{\textbf{The lower bound of $\ln(v^ng^n(y))$}}. Observe that $\max_{i\in I_0}\sigma_{i,n}\to \max_{i\in I_0}\sigma_i<\infty$ and $\min_{i\in I_0}\sigma_{i,n}\to \min_{i\in I_0}\sigma_i<\infty$, thus there exist constants $N_2<\infty$ and $M_2<\infty$ depending on $\t$ and $K$ such that
\begin{align*}
-\ln \big(v^n g^n(y)\big) &\leq N_2+{\max_{j\in I_0}(y-\mu_{i,n})^2\over \min_{i\in I_0}\sigma^2_{i,n}}
 \leq N_2+{\sum_{i\in I_0}(y-\mu_{i,n})^2\over \min_{i\in I_0}\sigma^2_{i,n}} \\ &\leq N_2+M_2\sum_{i\in I_0}(y-\mu_{i,n})^2.
\end{align*}
Since for every $i\in I_0$ we have $\mu_{i,n}\to \mu_i\in \mathbb{R}$, there exist constants $A$ and $B$ depending on $\mu_i$ such that
$$\sum_{i\in I_0}(y^2-2y \mu_{i,n}+\mu^2_{i,n})\leq k y^2+A|y|+B.$$
Hence, by taking $H(y):=N_2+M_2(k y^2+A|y|+B)+ N_1$, we can see that $\int H P^{\omega}_n\to \int HdP$.
Since the assumptions of Corollary \ref{cory} are fulfilled, $\ell^{\omega}_n(\t^n)\to \ell(\t,K)$ follows. Since $P(\Omega_0)=1$, we obtain $\ell_n(\t^n)\stackrel{a.s.}{\to} \ell(\t,K)$.
%-------------------

For the proof of $\ell(\t^n)\to \ell(\t)$, take $f_n:=f$ and use Proposition \ref{lemma} to deduce that $\ln (c^ng^n(y_n))\to h(y)$, where $c^n=v(\t^n)$ and $h(y)=\ln({u}_{I_0}g(y))$. Observe that $c^n$ is independent of $\omega$. The convergence $$\ell(\t^n)=E\ln (c^ng^n(Y_1))\to Eh(Y_1)=\ell(\t)$$ now follows either by the dominated convergence theorem or by the argument above when taking $P_n=P$.
\end{proof}
%-------------------
%\paragraph{Uniform convergence.}
%------------------------------------
%
\\\\
Proposition \ref{prop1} implies the almost sure uniform convergence of the log-pseudo-likelihood over $\Theta_o(u,U,N)$.
\begin{corollary} \label{cor:pseudo:unif}
Let the assumptions of Proposition \ref{prop1} hold. Then
\begin{equation}\label{uniform}
P\Big(\sup_{\t\in \Theta_o(u,U,N)}|\ell_n(\t)-\ell(\t)|\to 0\Big)=1.
\end{equation}
\end{corollary}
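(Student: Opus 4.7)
The plan is a subsequence-and-compactness argument built on Proposition \ref{prop1}. I would work on the probability-one event
\[\Omega_0:=\{\hat{f}_n\to f\text{ in }L_2\}\cap\{P_n\Rightarrow P\}\cap\{\sup_n\|\hat{f}_n\|_\infty<\infty\}\cap\Big\{\int y^2 P_n(dy)\to\int y^2 P(dy)\Big\}\]
from the proof of Proposition \ref{prop1}. The key ingredient is the final sentence of that proposition, which states that this $\Omega_0$ does not depend on the sequence $\theta^n\to\theta$; this is what permits a single null set to service an adaptively-chosen subsequence.

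Fix $\omega\in\Omega_0$ and suppose, for contradiction, that $\sup_{\theta\in\Theta_o(u,U,N)}|\ell_n(\theta)-\ell(\theta)|$ does not tend to zero. Then there exist $\varepsilon>0$, an increasing sequence $n_k$, and parameters $\theta^{(k)}\in\Theta_o(u,U,N)$ with $|\ell_{n_k}(\theta^{(k)})-\ell(\theta^{(k)})|\ge\varepsilon$. By sequential compactness of the extended space, pass to a further subsequence (not relabeled) along which (i) each component $(\mu_{i,n_k},\sigma_{i,n_k})$ converges in $[-\infty,\infty]\times[0,\infty]$ to some $(\mu_i,\sigma_i)$, producing a limit $\theta\in\Theta$, (ii) every pairwise difference $\mu_{i,n_k}-\mu_{j,n_k}$ converges in $[-\infty,\infty]$, (iii) the Gram entries $k^{n_k}_{ij}$ for $i,j\in I_1(\theta)$ converge to some matrix $K$ (the entries are bounded because the corresponding variances tend to positive finite limits). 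Since each $\theta^{(k)}$ contains at least one component in $[-N,N]\times[u,U]$ and there are only $k$ such indices, one further extraction makes the witnessing index $i_0$ constant; then $(\mu_{i_0},\sigma_{i_0})\in[-N,N]\times[u,U]$, so $i_0\in I_0(\theta)$ and in particular $|I_0(\theta)|\ge 1$. All hypotheses of Proposition \ref{prop1} are met, and it yields on this same $\omega$ both $\ell_{n_k}(\theta^{(k)})\to\ell(\theta,K)$ and $\ell(\theta^{(k)})\to\ell(\theta,K)$. When $\ell(\theta,K)$ is finite, subtracting immediately contradicts $|\ell_{n_k}(\theta^{(k)})-\ell(\theta^{(k)})|\ge\varepsilon$, and the contradiction establishes (\ref{uniform}) on $\Omega_0$.

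The main obstacle is precisely the bookkeeping that lets the subsequence $\theta^{(k)}$ depend on $\omega$, and it is handled by the uniformity clause of Proposition \ref{prop1}. A secondary technical point is the case $\ell(\theta,K)=-\infty$, in which both $\ell_{n_k}(\theta^{(k)})$ and $\ell(\theta^{(k)})$ diverge to $-\infty$ and the difference is not immediately controlled. To close this case, I would return to the proof of Proposition \ref{prop1}, which writes both quantities as $\int h^\omega_{n_k}\,dP^\omega_{n_k}$ and $\int h_{n_k}\,dP$ with a common dominating bound $H$ (the constant upper bound on $\ln(v^n g^n)$ from (\ref{Hbound})). Weak convergence $P^\omega_{n_k}\Rightarrow P$ together with the pointwise convergence of the integrands then forces $|\ell_{n_k}(\theta^{(k)})-\ell(\theta^{(k)})|\to 0$ in this case as well, completing the contradiction.
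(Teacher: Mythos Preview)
Your argument is essentially the paper's: fix $\omega$ in the good set $\Omega_0$, assume the supremum does not vanish, extract a subsequence in the compactified parameter space with $K_{n'}\to K$ and $I_0(\theta)\ne\emptyset$, and invoke both conclusions of Proposition~\ref{prop1} to obtain the contradiction. Your write-up is in fact more explicit than the paper's about extracting convergent pairwise differences $\mu_{i,n}-\mu_{j,n}$ and pinning down a fixed witnessing index $i_0\in I_0(\theta)$.

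One remark on your secondary point: the case $\ell(\theta,K)=-\infty$ does not arise here, so no patch is needed. Whenever $I_0(\theta)\ne\emptyset$ one has $\sum_{i\in I_0}u_i>0$: if not, shifting mass $\epsilon>0$ from $I_1\cup I_2$ onto some $i_0\in I_0$ changes $t(\cdot)$ at first order by $-2\epsilon\langle f,g_{i_0}\rangle<0$ (the $I_1$ quadratic form only decreases), contradicting optimality of $u$. Hence $\ell(\theta,K)$ is finite and the subtraction goes through directly. This is fortunate, because your proposed fix would not work as written: a common \emph{upper} bound on $h^\omega_{n_k}$ and weak convergence $P^\omega_{n_k}\Rightarrow P$ give no control on the difference of two integrals that both diverge to $-\infty$.
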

\begin{proof}
Let $\Omega_o$ be the set with probability measure 1, where the convergences $\t^n\to \t$ and $K_n\to K$  entail $\ell_n(\t^n)\to \ell(\t,K)$, provided that $I_0(\t)\ne \emptyset$. Fix $\omega \in \Omega_o$.
When $\sup_{\t\in \Theta_o(u,U,N)}|\ell^{\omega}_n(\t)-\ell(\t)|\to 0$ fails, there exists a sequence $\t_{n}\in \Theta_o(u,U,N)$ and some $\epsilon_o>0$ such that $|\ell^{\omega}_n(\t^n)-\ell(\t^n)|>\epsilon_o$ for every $n$. It is easy to see that there exists a subsequence $\t^{n'}$ such that $\t^{n'}\to \t\in \Theta(u,U,N)$ and $K_{n'}\to K$.  Since $\omega\in \Omega_o$, Proposition \ref{prop1} establishes
$\ell^{\omega}_{n'}(\t^{n'})\to \ell(\t,K)$ and $\ell(\t^{n'})\to \ell(\t,K)$ -- a contradiction.
\end{proof}
%----------
%\paragraph{The uniform convergence implies the consistency.}
%
\\\\
We can now prove the main theorem of the article. The uniform convergence in (\ref{uniform}) implies the consistency of $\tt$, provided that $\tt$ eventually belongs to $\Theta_o(u,U,N)$.
%---------------
%\begin{lemma}\label{prop2} Suppose there exists $0<u<U<\infty$ and $0<N<\infty$ so that $\t^*\in
%\Theta_o(u,U,N)$ and  $P(\tt \in \Theta_o(u,U,N),\, {\rm eventually})=1$. Then $\tt\to \t^*$, a.s..
%\end{lemma}
%\begin{proof}
% Moved
%\end{proof}

\paragraph{The proof of Theorem \ref{thm}.}
%\\
%-----------------------
%\begin{proof}
% from Lemma

a) We start by proving that $\tt \stackrel{a.s.}{\to} \t^*$. By Proposition \ref{uUNbound}, there exist constants $u$, $U$, $N$ (depending solely on $f$) such that $P(\tt\in \Theta_o(u,U,N)\,\, {\rm eventually})=1$. These constants can be chosen so that $\t^*\in \Theta_o(u,U,N)$. Let $\Omega_o$ be the set where $\ell_n(\t^*)\to \ell(\t^*)$, $\tt \in \Theta_o(u,U,N)$ eventually, and the uniform convergence (\ref{uniform}) holds. Since all these events hold with probability one, clearly $P(\Omega_o)=1$. On this set, the following relationships hold:
\begin{align}\label{liminf}
\ell_n(\tt)&\geq \ell_n(\t^*)-\epsilon_n\to \ell(\t^*)\quad \Rightarrow \quad  \lim\inf_n \ell_n(\tt)\geq  \ell(\t^*).
\end{align}
Because of the uniform convergence (\ref{uniform}), $\lim\sup_n\ell_n(\tt)=\lim\sup_n \ell(\tt)\leq  \ell(\t^*)$, which together with (\ref{liminf}) implies $\ell_n(\tt)\to \ell(\t^*)$. From every subsequence of $\tt$, one can find a further subsequence $n'$ such that $K_{n'}\to K$ and $\t^{n'}\to \t\in \Theta_o(u,U,N)$. By Proposition \ref{prop1},
$\ell_{n'}(\hat{\theta}^{n'})\to \ell(\t,K)$. On the other hand, $\ell_{n'}(\hat{\theta}^{n'})\to \ell(\t^*)$, thus $\ell(\t,K)=\ell(\t^*)$. By (\ref{pisi}),  $\t=\t^*$. This implies $\tt\to \t^*$.

b) Since the convergence $\tt\to \t^*$ entails the convergence $g^n:=g(\tt,\cdot)\to g(\t^*,\cdot)=:g$,
and $w^*=\arg\min_{w\in S_k}\|f-wg\|$, the convergence $v^n(\tt)g^n\stackrel{a.s.}{\to}  w^*g$  follows from Corollary \ref{lihtne}. The convergence of weights follows from the uniqueness of Gaussian densities and our assumptions on $f$ (recall that $s(\t^*)=k$ and $w^*_i>0$ for every $i$), which imply that any convergent subsequence of $v^n(\tt)$ must have limit $w^*$. This concludes the proof.
%\end{proof}

\subsection*{Acknowledgements} This work was funded by the Estonian Research Council grant PRG865.

\end{document}